\newcommand{\ben}{{\mathbb N}}
\newcommand{\ber}{{\mathbb R}}
\newcommand{\beq}{{\mathbb Q}}
\newtheorem{theorem}{Theorem}[section]
\newtheorem{lemma}[theorem]{Lemma}
\theoremstyle{definition}
\newtheorem{definition}[theorem]{Definition}
\theoremstyle{remark}
\numberwithin{equation}{section}
\theoremstyle{definition}
\begin{document}
\title{Preservation of notion of large sets near zero over reals}
\author{Kilangbenla Imsong}
\address{Department of Mathematics, Nagaland University, Lumami-798627, Nagaland, India.}
\email{kilangbenla@nagalanduniversity.ac.in}
\author{Ram Krishna Paul}
\address{Department of Mathematics, Nagaland University, Lumami-798627, Nagaland, India.}
\email{rmkpaul@gmail.com, rkpaul@nagalanduniversity.ac.in}
\begin{abstract}
    The study of the size of subsets in a semigroup have shown that many of these subsets have strong combinatorial properties and contribute richly to the algebraic structure of the Stone- Cech compactification of a discrete semigroup. N. Hindman and D. Strauss have proved that if $u,v \in \, \ben$, $M$ is a $u \times v$  matrix satisfying restrictions that vary with the notion of largeness and if $\Psi$ is a notion of large sets in $\ben$ then $\{\vec{x} \in \ben^v: M\vec{x} \in \Psi^u \}$ is a large set in $\ben^v$.  In this article, we investigate the above result for various notions of largeness near zero in $\ber^+$.
\end{abstract}
\maketitle

\section{Introduction}
Many of the powerful combinatorial results of Ramsey Theory  can be expressed using matrices. For example, van der Waerden's theorem \cite{W} can be stated as, "The entries of the matrix  $M \Vec{x}$, where 
  \begin{center} 
 $   M = \begin{pmatrix}
      1 & 0\\
      1 & 1\\
      1 & 2\\
      \vdots\\
      1 & l\\
      \end{pmatrix}$
 and 
      $\Vec{x} =\begin{pmatrix}
  a \\
  d
  \end{pmatrix} \in  \mathbb{N}^2$, \\
  \end{center}
  are monochromatic".  This property of the matrix is called \textit{Image Partition Regularity} which is defined below.

  \begin{definition} Let $u,v \in \mathbb{N}$ and let $M$ be a $u \times v$ matrix with entries from $\mathbb{Q}$. The matrix $M$ is image partition regular over $\mathbb{N}$ (abbreviated IPR/$\ben$) if and only if whenever $r \in \mathbb{N}$ and $\mathbb{N} = \bigcup_{i=1}^{r} C_i$, there exists $i\in \{1,2,...,r\}$ and $\Vec{x} \in \mathbb{N}^v$ such that $M\Vec{x} \in C_i^u$ . 
  \end{definition}

The first characterisations of matrices with entries from $\mathbb{Q}$ that are IPR/$\mathbb{N}$ were obtained in \cite{HLb} and over the years other characterisations have been obtained, which can be found in detail in \cite{HSc}.  Among these, the ones involving the notion of \textit{central} sets have been a focal point of study primarily because of the rich combinatorial structure they possess.   Central subsets of $\mathbb{N}$ were first introduced by Furstenberg defined in terms of notions of topological dynamics and he has given and proved the Central Sets Theorem which we state below. We let $\mathcal{P}_f(X)$ to denote any finite collection of subsets of a set $X$.
\begin{theorem} Let $l \in \mathbb{N}$ and for each $i \in \{1,2, \cdots, l\}$, let $\{y_{i,n}\}_{n=1}^{\infty}$ be a sequence in $\mathbb{Z}$. Let $C$ be central subset of $\mathbb{N}$. Then there exists sequences $\{a_n\}_{n=1}^{\infty}$ in $\mathbb{N}$ and $\{
H_n\}_{n=1}^{\infty}$ in $\mathcal{P}_f(\mathbb{N})$ such that 

(1) for all $n$, max $H_n<$ min $H_{n+1}$ and

(2) for all $F \in \mathcal{P}_f(\mathbb{N})$, and all $i \in \{1,2,...,l\}$,
\begin{center}
    $\sum_{n \in F} ( a_n + \sum_{t \in H_n}y_{i,t})\in C$.
\end{center}
\end{theorem}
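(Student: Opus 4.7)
The plan is to exploit the algebraic characterization of central sets in $\beta\ben$: $C \subseteq \ben$ is central if and only if $C$ belongs to some minimal idempotent $p$ in $(\beta\ben,+)$. Fix such a $p$ with $C \in p$ and set $C^\star = \{x \in C : -x + C \in p\}$; standard idempotent calculus then yields $C^\star \in p$ together with the self-similarity property $-x + C^\star \in p$ for every $x \in C^\star$, which is what makes the induction drive itself.

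I would construct $\{a_n\}$ and $\{H_n\}$ by induction on $n$, maintaining the strengthened invariant that at stage $k$, for every non-empty $F \subseteq \{1,\ldots,k\}$ and every $i \in \{1,\ldots,l\}$, the sum $s_{F,i} := \sum_{n \in F}(a_n + \sum_{t \in H_n} y_{i,t})$ lies in $C^\star$. To pass from stage $k$ to $k+1$, I would package the finitely many previous-stage constraints into the single set $B := C^\star \cap \bigcap_{F,i} (-s_{F,i} + C^\star)$, which still belongs to $p$ as a finite intersection of $p$-sets. The induction step then reduces to finding $a_{k+1} \in \ben$ together with a finite $H_{k+1}$ satisfying $\min H_{k+1} > \max H_k$, such that $a_{k+1} + \sum_{t \in H_{k+1}} y_{i,t} \in B$ \emph{simultaneously} for every $i \leq l$. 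Clause (1) is enforced by construction, and clause (2) then follows from the invariant together with $C^\star \subseteq C$.

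The hard part is this simultaneous step across the $l$ sequences, since a single integer $a_{k+1}$ must satisfy $l$ potentially conflicting translation conditions at once. I would handle it by transferring to the product semigroup $(\bez^l,+)$ via the diagonal homomorphism $\Delta : \ben \to \bez^l$, $\Delta(a) = (a,\ldots,a)$, together with the vector sequence $\vec{y}_t = (y_{1,t},\ldots,y_{l,t})$. The continuous extension $\tilde{\Delta}: \beta\ben \to \beta(\bez^l)$ is a semigroup homomorphism, so it sends the idempotent $p$ to an idempotent $q$ in $\beta(\bez^l)$, and $B^l \in q$ because $\Delta^{-1}(B^l) = B \in p$. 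The simultaneous requirement then becomes the single containment $\Delta(a_{k+1}) + \sum_{t \in H_{k+1}} \vec{y}_t \in B^l$, and combining the FS-set content of $B^l$ guaranteed by $B^l \in q$ with the freedom to push $\min H_{k+1}$ arbitrarily high produces the required pair $(a_{k+1}, H_{k+1})$, closing the induction.
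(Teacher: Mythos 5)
First, a point of comparison: the paper does not actually prove this statement --- it quotes Furstenberg's Central Sets Theorem and cites \cite[Proposition~8.21]{F} for the proof --- so the only question is whether your argument stands on its own. Your skeleton is the correct one for the standard algebraic proof: fix a minimal idempotent $p$ with $C\in p$, pass to $C^\star=\{x\in C:-x+C\in p\}$ (so that $C^\star\in p$ and $-x+C^\star\in p$ for $x\in C^\star$), and run the induction with $B=C^\star\cap\bigcap_{F,i}(-s_{F,i}+C^\star)\in p$. Up to the point where everything reduces to finding $a_{k+1}$ and $H_{k+1}$ with $\min H_{k+1}>\max H_k$ and $a_{k+1}+\sum_{t\in H_{k+1}}y_{i,t}\in B$ for all $i$, the argument is fine.

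The gap is precisely at that step, the one you flag as the hard part. Setting $q=\tilde\Delta(p)$ and noting $B^l\in q$ gives you nothing beyond $B\in p$: since $\Delta(\ben)\in q$, the FS-structure witnessing $B^l\in q$ lives entirely on the diagonal, i.e.\ consists of points $(a,\dots,a)$, and nothing in your argument ties the ultrafilter $q$ to the set $\{\Delta(a)+\sum_{t\in H}\vec y_t : a\in\ben,\ H\in\mathcal{P}_f(\ben),\ \min H>\max H_k\}$; ``the freedom to push $\min H_{k+1}$ arbitrarily high'' does not create that tie. A decisive symptom is that after fixing $p$ you never use its minimality, so if the argument worked it would prove the conclusion for every IP set; that is false --- with $y_{i,t}=(i-1)t$ the conclusion forces $3$-term arithmetic progressions, which IP sets such as $FS(\langle 10^n\rangle_{n=1}^\infty)$ do not contain. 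The missing idea, which is exactly where centrality enters, is to work in $Y=\beta\ben\times(\beta\bez)^l$ with $E=\Delta(\ben)\cup\{(a,\,a+\sum_{t\in H}y_{1,t},\dots,\,a+\sum_{t\in H}y_{l,t}): a\in\ben,\ H\in\mathcal{P}_f(\ben)\}$: one shows $\overline{E}$ is a compact subsemigroup of $Y$ in which the closures $\overline{I_k}$ of the subsets with $\min H>k$ are two-sided ideals, that $(p,\dots,p)\in\overline{E}\cap K(Y)$ \emph{because} $p$ is minimal, hence $(p,\dots,p)\in K(\overline{E})\subseteq\overline{I_k}$, and only then does the basic neighbourhood $\overline{B}^{\,l+1}$ of $(p,\dots,p)$ yield the required $a_{k+1}$ and $H_{k+1}$. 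As written, your proposal does not close the induction.
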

\begin{proof}
 \cite[Proposition 8.21]{F}
\end{proof}
Central sets were also defined using the algebra of Stone $\check{C}$ech compactification of a discrete semigroup which can be found in  \cite[Definition 4.42]{HSc}. The equivalence of the notions of \textit{central} and \textit{dynamically central} was established by V. Bergelson and N. Hindman in \cite{BH}. 
The following theorem is the characterisation of image partition regular matrices involving central sets.

\begin{theorem} \label{th1}Let $u,v \in \mathbb{N}$ and let $A$ be a $u \times v$ matrix with entries from $\mathbb{Q}$. The following statements are equivalent.

(a) $A$ is image partition regular over $\ben$.

(b) For every central set $C$ in $\mathbb{N}$, there exists  $\Vec{x} \in \mathbb{N}^v$ such that $A\Vec{x} \in C^u$. 

(c)  For every central set $C$ in $\mathbb{N}$, \{$\Vec{x} \in \mathbb{N}^v$ :  $A\Vec{x} \in C^u$ \} is central in $ \mathbb{N}^v$.
    
\end{theorem}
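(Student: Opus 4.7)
The plan is to prove the equivalence by the cycle $(a) \Rightarrow (c) \Rightarrow (b) \Rightarrow (a)$. Two of these implications are routine. For $(c) \Rightarrow (b)$, any central subset of $\ben^v$ is nonempty, so centrality of $\{\vec{x} \in \ben^v : A\vec{x} \in C^u\}$ immediately delivers an $\vec{x}$ of the required form. For $(b) \Rightarrow (a)$, given a finite partition $\ben = \bigcup_{i=1}^r C_i$, I would invoke the standard fact that in any finite partition of $\ben$ at least one cell is central (because the smallest ideal of $\beta\ben$ meets some $C_i$), and then apply $(b)$ to that central cell to produce $\vec{x} \in \ben^v$ with $A\vec{x} \in C_i^u$.

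The main content is $(a) \Rightarrow (c)$. My plan is to route through a standard structural characterisation of IPR matrices: $A$ is IPR/$\ben$ if and only if there exist $m \in \ben$ and a $v \times m$ matrix $E$ with rational entries such that $AE$ has non-negative integer entries and satisfies the first entries condition (compare the equivalent formulations catalogued in \cite{HSc}). Given a central set $C \subseteq \ben$, I would fix a minimal idempotent $p \in \beta\ben$ with $C \in p$, read off the columns of $E$ as sequences in $\bez$, and apply the Central Sets Theorem (Theorem 1.2 above) to obtain sequences $\{a_n\}_{n=1}^{\infty}$ in $\ben$ and $\{H_n\}_{n=1}^{\infty}$ in $\mathcal{P}_f(\ben)$ whose prescribed linear combinations all land in $C$. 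Reassembling these combinations coordinate-wise yields $\vec{x} \in \ben^v$ satisfying $A\vec{x} \in C^u$, and the sum-set structure of the construction is designed so that the resulting collection of $\vec{x}$'s can be matched with a member of a minimal idempotent in $\beta(\ben^v)$, thereby certifying centrality.

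The main obstacle is precisely this last identification: upgrading nonemptiness of the preimage to \emph{centrality} in $\ben^v$. The bookkeeping is delicate because the coordinates of $\vec{x}$ are coupled through the rows of $A$, so the Central Sets Theorem must be applied in a way that simultaneously constrains all $v$ coordinate sequences while forcing each of the $u$ linear combinations $(A\vec{x})_i$ into $C$. I expect the cleanest execution is a direct ultrafilter argument in $\beta(\ben^v)$, locating a minimal idempotent $q$ all of whose members satisfy the image-partition property and which projects correctly onto the idempotent $p$ witnessing centrality of $C$; the technical heart is showing that such a $q$ exists, using the continuity of the linear map induced by $A$ together with the first entries factorisation of $AE$.
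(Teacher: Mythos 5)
First, note that the paper does not actually prove this statement: its ``proof'' is the citation \cite[Theorem 15.24]{HSc}, so there is no in-paper argument to compare against line by line. Judged on its own terms, your proposal gets the outer implications right --- $(c)\Rightarrow(b)$ because a central set is a member of an ultrafilter and hence nonempty, and $(b)\Rightarrow(a)$ because some cell of any finite partition of $\ben$ belongs to a minimal idempotent and is therefore central --- and your outline of $(a)\Rightarrow(c)$ via a first-entries factorisation $AE$ is indeed the standard route, the same template this paper follows for its near-zero analogues.

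However, as written the proposal has a genuine gap exactly where you flag one: the passage from ``there exists $\vec{x}$ with $A\vec{x}\in C^u$'' to ``$\{\vec{x}\in\ben^v : A\vec{x}\in C^u\}$ is central in $\ben^v$'' is announced (``I expect the cleanest execution is a direct ultrafilter argument\dots the technical heart is showing that such a $q$ exists'') but never carried out, and this step is the entire content of $(a)\Rightarrow(c)$. The Central Sets Theorem applied to the columns of $E$ only produces witnesses, i.e.\ it proves $(b)$, not $(c)$. What is actually needed is the argument that the paper supplies, in the near-zero setting, as Lemma \ref{leA}: with $\phi(\vec{x})=M\vec{x}$ and $\widetilde{\phi}$ its continuous extension, one shows that $\widetilde{\phi}^{-1}[\{\vec{p}\,\}]$ is nonempty (this is where the first-entries factorisation and the Central Sets Theorem enter, via a finite-intersection-property argument showing $\vec{p}$ is a limit point of $\widetilde{\phi}$ of the relevant subsemigroup), observes that it is a compact subsemigroup and hence contains an idempotent $w$, and then passes to a minimal idempotent $q\le w$ and checks $\widetilde{\phi}(q)=\vec{p}$ by minimality of $\vec{p}$; only then does $\phi^{-1}[C^u]\in q$ certify centrality. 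There is also a point your sketch glosses over: since $A$ has entries from $\beq$, the map $\vec{x}\mapsto A\vec{x}$ need not land in $\ben^u$, so the domain must be restricted (or the argument run in $\bez^u$ or $\beq^u$) before the ultrafilter machinery applies. Until the existence and minimality of $q$ are actually proved, the proposal is a correct plan rather than a proof.
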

\begin{proof} \cite[Theorem 15.24]{HSc}
\end{proof}

The sets which satisfy the conclusion of Central Sets Theorem are called C-sets and hence the natural question arises as to whether Theorem \ref{th1}  holds true if \textit{central set} is  replaced by \textit{C set}. This has been answered in affirmative and Hindman and Strauss has proved it in \cite [Theorem 1.4]{HSb}.\\
For any dense subsemigroup  $S$ of $(\mathbb{R},+)$, it has been observed that there are subsets of $\mathbb{R}$ living near zero which also satisfy a version of the Central Sets Theorem. These sets have been defined as \textit{central sets near zero} in \cite{HL} and the corresponding notion of \textit{Image Partition Regularity Near Zero} has been developed in \cite{DH}.
\begin{definition}\label{thtB} Let $S$ be a subsemigroup
of $({\mathbb R},+)$ with $0\in$ $c\ell S$, let $u,v\in\ben$, and let
$A$ be a $u\times v$ matrix with entries from $\beq$.
Then $A$ is {\em image partition regular over $S$ near zero\/}
(abbreviated IPR/$S_0$)
if and only if, whenever $S\setminus\{0\}$ is finitely
colored and $\delta>0$, there exists $\vec x\in S^v$
such that the entries of $A\vec x$ are monochromatic and lie
in the interval $(-\delta,\delta)$.
\end{definition}
In the same paper \cite{DH}, image partition regularity near zero over various dense subsemigroups have been considered. Of notable significance for our work is the result that for finite matrices, image partition regular matrices over $\mathbb{R}$ and image partition regular matrices near zero over $\mathbb{R}^+$ are same which has been shown by one of the authors  in \cite [Theorem 2.3]{DPb}. 
The notion of C sets have also been extended to C sets near zero in \cite{BT} and these sets also have rich combinatorial structure. In \cite{KR}, the authors of this paper have proved the following theorem.

\begin{theorem}\label{th2} Let $u,v \in \mathbb{N}$ and let $M$ be a $u \times v$ matrix with entries from $\mathbb{R}$. The following statements are equivalent.\\
\hspace{1 cm}(a) $M$ is image partition regular over $\mathbb{R}^+$.\\
\hspace{1 cm} (b) For every C-set near zero $C$ in $\mathbb{R}^+$, there exists  $\Vec{x} \in (\mathbb{R}^+)^v$ such that $M\Vec{x} \in C^u$. \\
\hspace{1 cm} (c)  For every C-set near zero $C$ in $\mathbb{R}^+$, \{$\Vec{x} \in (\mathbb{R}^+)^v$ :  $M\Vec{x} \in C^u$ \} is a C-set near zero in $ (\mathbb{R}^+)^v$.
\end{theorem}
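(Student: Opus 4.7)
The plan is to establish the cycle (c)$\Rightarrow$(b)$\Rightarrow$(a)$\Rightarrow$(c), paralleling the structure of \cite[Theorem 15.24]{HSc} and its C-set analogue \cite[Theorem 1.4]{HSb}. The implication (c)$\Rightarrow$(b) is immediate because any C-set near zero is nonempty. For (b)$\Rightarrow$(a), given a finite partition $\mathbb{R}^+=\bigcup_{i=1}^{r}C_i$ and $\delta>0$, partition regularity of C-sets near zero (arising from their ultrafilter characterization via essential idempotents of $0^+(\mathbb{R}^+)$, each of whose members contains $(0,\delta)\cap\mathbb{R}^+$) yields some $i$ for which $C_i\cap(0,\delta)$ is itself a C-set near zero. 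Applying (b) to this cell produces $\vec{x}\in(\mathbb{R}^+)^v$ with $M\vec{x}$ monochromatic and entrywise in $(-\delta,\delta)$, so $M$ is IPR near zero, which by \cite[Theorem 2.3]{DPb} is equivalent to $M$ being IPR over $\mathbb{R}^+$.

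The main implication is (a)$\Rightarrow$(c). Fix a C-set near zero $C\subseteq\mathbb{R}^+$ and set $B=\{\vec{x}\in(\mathbb{R}^+)^v:M\vec{x}\in C^u\}$. Using the algebraic characterization of C-sets near zero as sets belonging to some essential idempotent in $0^+(\mathbb{R}^+)$, I would first pick such an idempotent $p$ with $C\in p$, next promote $p$ to an essential idempotent $\tilde p\in 0^+((\mathbb{R}^+)^u)$ satisfying $C^u\in\tilde p$ via a product construction on coordinates, and finally pull $\tilde p$ back through the continuous extension of the linear map $\vec{x}\mapsto M\vec{x}$ to an essential idempotent $q\in 0^+((\mathbb{R}^+)^v)$ with $B\in q$. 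The IPR hypothesis on $M$ is used precisely to guarantee that such a pullback lands inside $0^+((\mathbb{R}^+)^v)$ and preserves essentiality; without IPR, the pullback might only produce an idempotent somewhere in the larger compactification $\beta(\mathbb{R}^v)_d$.

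The principal obstacle is the construction of this pullback idempotent $q$. The linear map induced by $M$ takes $(\mathbb{R}^+)^v$ only into $\mathbb{R}^u$ rather than $(\mathbb{R}^+)^u$, so maintaining positivity together with the near-zero property along the argument is delicate, and it is here that image partition regularity must be invoked, exactly as in the analogous step of \cite[Theorem 15.24]{HSc}. A combinatorial alternative would bypass $q$ by verifying the Central Sets Theorem near zero for $B$ directly: push forward any given sequences in $(\mathbb{R}^+)^v$ through $M$ to sequences in $\mathbb{R}^u$, apply the near-zero Central Sets Theorem for $C^u$ in $(\mathbb{R}^+)^u$ to produce candidate data, and use IPR of $M$ (applied to suitably augmented matrices at each inductive stage) to lift this data back to vectors $\vec{a}_n\in(\mathbb{R}^+)^v$ whose $M$-images realize the required sums in $C^u$. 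Along either route, the technical heart is showing that essentiality (equivalently, the Central Sets condition) survives the pullback through $M$.
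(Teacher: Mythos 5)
The paper does not actually prove Theorem \ref{th2}: it is imported from the authors' earlier work \cite{KR} and used here as a black box, so there is no in-paper argument to compare yours against step by step. Judged on its own terms, your outline handles the easy implications correctly: (c)$\Rightarrow$(b) is trivial, and (b)$\Rightarrow$(a) works exactly as you describe, since some cell of the coloring, intersected with $(0,\delta)$, belongs to any chosen idempotent of $J_0(\mathbb{R}^+)$ and is therefore a C-set near zero, and IPR near zero over $\mathbb{R}^+$ is equivalent to IPR over $\mathbb{R}^+$ by \cite[Theorem 2.3]{DPb}.

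The gap is in (a)$\Rightarrow$(c), and you have named it yourself without closing it: ``the technical heart is showing that essentiality survives the pullback.'' That sentence \emph{is} the theorem. Concretely, one needs an idempotent $q$ in $J_0((\mathbb{R}^+)^v)$ --- not merely in $0^+((\mathbb{R}^+)^v)$ --- with $\phi^{-1}[C^u]\in q$, where $\phi(\vec{x})=M\vec{x}$. Two separate things must be proved: first, that $\widetilde{\phi}^{-1}[\{\vec{p}\,\}]\cap 0^+((\mathbb{R}^+)^v)$ is a nonempty compact subsemigroup, which is where IPR enters via the first-entries reduction and the near-zero central sets machinery (compare Lemma \ref{leA} and Theorems \ref{th3} and \ref{th4} of the present paper); and second, that this subsemigroup meets the closed two-sided ideal $J_0((\mathbb{R}^+)^v)$, which requires showing that the sets $\phi^{-1}[C^u]\cap(0,\epsilon)^v$ are J-sets near zero and that this can be pushed into an ultrafilter all of whose members are J-sets near zero (using partition regularity of J-sets near zero). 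Note that Lemma \ref{leC} of this paper goes in the opposite direction (images, not preimages, of elements of $J_0$) and does not supply this. There is also a minor imprecision: ultrafilters push forward rather than pull back, and the codomain of $\widetilde{\phi}$ is the product $(\beta\mathbb{R}^+)^u$ rather than $\beta((\mathbb{R}^+)^u)$, so ``promoting $p$ to $\tilde p\in 0^+((\mathbb{R}^+)^u)$'' should be replaced by working with the point $\vec{p}=(p,\dots,p)$ of $(0^+(\mathbb{R}^+))^u$ and the fiber of $\widetilde{\phi}$ over it. Your proposed combinatorial alternative (verifying the Central Sets Theorem near zero for $B$ directly) is the route of \cite{HSb}, but it too is only stated as a plan; the inductive lifting step is precisely the content that must be supplied.
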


There are several other notions of largeness in a commutative semigroup and the relationship among them can be found in \cite{Ha}. These notions can also be defined analogously for a dense subsemigroup of $((0,\infty),+)$ and in section 2, we define the notions of  strongly central near zero, strongly central$^*$ near zero, thick near zero, piecewise syndetic$^*$ near zero, $Q^*$ near zero, $P^*$ near zero, central$^*$ near zero, Quasicentral near zero, Quasicentral$^*$ near zero, IP$^*$ near zero, $C^*$ near zero and $J^*$ near zero. If $\Psi$ is one of these notions , in section 3 we prove the following theorem.
\begin{theorem}\label{T1}
    Let $u,v \in \ben, M$ be $u \times v$ matrix with entries from $\ber^+ \cup \{0\}$. If $M$ is $IPR/\ber^+$, $C \subseteq \ber^+$ is a $\Psi$ set in $\ber^+$, then $\{\vec{x} \in (\ber^+)^v: M\vec{x} \in C^u\}$ is a $\Psi$ set in $(\ber^+)^v$.
\end{theorem}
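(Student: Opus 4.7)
The plan is to handle all the notions $\Psi$ in parallel by working with their algebraic characterizations inside the semigroup $0^+$ of ultrafilters on $\ber^+$ that converge to $0$. Every notion in the list corresponds to a distinguished subset $\mathcal{E}_\Psi$ of $0^+$: the starred notions ($\Psi^*$) to sets that are members of \emph{every} ultrafilter in $\mathcal{E}_\Psi$, and the unstarred notions (thick, piecewise syndetic, quasicentral, and so on) to sets that are members of \emph{some} ultrafilter in $\mathcal{E}_\Psi$. For example $\mathcal{E}_\Psi$ will be the set of idempotents in $0^+$ for the IP case, the set of minimal idempotents for the (strongly) central case, the set of essential idempotents for the C case, and the closure of $K(0^+)$ for the piecewise syndetic case. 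The proof template is the one already used in Theorem \ref{th2} and in \cite[Theorem 1.4]{HSb}: identify $\mathcal{E}_\Psi$, and then transport the hypothesis on $M$ through the continuous extension of $\vec{x} \mapsto M\vec{x}$.

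First I would set up, for each row $i \in \{1,\ldots,u\}$ of $M$, the map $\widetilde{R_i} : \beta((\ber^+)^v)_d \to \beta(\ber^+)_d$ induced by $\vec{x} \mapsto \sum_{j=1}^v m_{ij}\,x_j$; because $m_{ij}\geq 0$, this restricts to a continuous semigroup homomorphism from $0_v^+$ (the analogue of $0^+$ for $(\ber^+)^v$) into $0^+$. The crucial technical lemma is a \emph{lifting principle}: because $M$ is IPR/$\ber^+$, for each $\mathcal{E}_\Psi$ there exists $\vec{p}\in \mathcal{E}_{\Psi,v}\subseteq 0_v^+$ such that $\widetilde{R_i}(\vec{p}) \in \mathcal{E}_\Psi$ for every $i$ simultaneously. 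For the idempotent-based notions this follows by an argument like the one used in \cite{KR} for the essential-idempotent case: the prescribed type of idempotent can be pulled back through the continuous homomorphism once the IPR hypothesis guarantees that the required structured element is attained. For the ideal-based notions it reduces to checking that $\widetilde{R_i}$ respects the relevant left-ideal or minimal-ideal structure of $0^+$.

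With the lifting principle in hand, the theorem follows in two modes. For a starred notion $\Psi^*$, take any $\vec{p}\in \mathcal{E}_{\Psi,v}$; then $\widetilde{R_i}(\vec{p})\in \mathcal{E}_\Psi$, so $C\in \widetilde{R_i}(\vec{p})$, i.e.\ $\{\vec{x} : (M\vec{x})_i \in C\}\in \vec{p}$, and intersecting over $i$ places $\{\vec{x} : M\vec{x}\in C^u\}$ into every $\vec{p}\in \mathcal{E}_{\Psi,v}$, which is exactly the definition of $\Psi^*$ near zero in $(\ber^+)^v$. For an unstarred notion $\Psi$, pick $p\in \mathcal{E}_\Psi$ with $C\in p$ and use the lifting principle together with the identification of IPR/$\ber^+$ with IPR near zero from \cite[Theorem 2.3]{DPb} to manufacture a companion $\vec{p}\in \mathcal{E}_{\Psi,v}$ with $\{\vec{x} : M\vec{x}\in C^u\}\in \vec{p}$.

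The main obstacle will be the uniform verification of the lifting principle across the twelve listed notions. While the template is the same, each $\mathcal{E}_\Psi$ has its own algebraic signature and the IPR/$\ber^+$ hypothesis feeds in through different combinatorial consequences (membership in minimal left ideals, existence of idempotents of a prescribed type, and so on). The near-zero setting adds a further subtlety: one must keep the relevant ultrafilters inside $0^+$, which is itself only a closed subsemigroup of $\beta(\ber^+_d)$, so care is needed to confirm that closures, smallest ideals, and essential idempotents computed inside $0^+$ behave the way the classical arguments over $\beta\ben$ require.
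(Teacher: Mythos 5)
Your overall strategy --- extend $\vec{x}\mapsto M\vec{x}$ continuously to $\beta((\ber^+)^v)$, show it carries $0^+((\ber^+)^v)$ into $(0^+(\ber^+))^u$, and use the IPR$/\ber^+$ hypothesis to lift a distinguished idempotent $p$ to an idempotent $q$ with $\widetilde{\phi}(q)=\vec{p}$ --- is exactly the paper's (Lemmas \ref{l1} and \ref{leA}). But the uniform ``two modes'' template has concrete gaps. For the starred notions your argument actually needs a \emph{forward-image} statement (``for every $q\in\mathcal{E}_{\Psi,v}$, each component of $\widetilde{\phi}(q)$ lies in $\mathcal{E}_\Psi$''), whereas the lifting principle you state is only existential; these are different claims, and the forward one is where the work sits. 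A continuous homomorphism does not automatically ``respect the minimal-ideal structure'': to get $\widetilde{\phi}[K(0^+((\ber^+)^v))]\subseteq (K(0^+(\ber^+)))^u$ (needed for PS$^*$, central$^*$, QC$^*$) one must first use the IPR hypothesis to show that $\widetilde{\phi}[0^+((\ber^+)^v)]$ meets $(K(0^+(\ber^+)))^u$ and then invoke \cite[Theorem 1.65]{HSc}; this is the second half of Lemma \ref{leA} and is missing from your outline. For the $C^*$ and $J^*$ cases the forward-image statement is Lemma \ref{leC} (each component of $\widetilde{\phi}(q)$ for $q\in J_0((\ber^+)^v)$ lies in $J_0(\ber^+)$), which again is not a lifting claim.

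Two further mismatches with the list of notions. Strongly central$^*$ and thick near zero quantify over \emph{left ideals} (``there is a minimal left ideal $L$ with $E(L)\subseteq\overline{A}$'', resp.\ ``$\overline{A}$ contains a left ideal''), so they fit neither of your modes; the paper handles them by building the specific minimal left ideal $L'=0^+((\ber^+)^v)+q$ from the lifted idempotent $q$ and checking that $E(L')$, resp.\ all of $0^+((\ber^+)^v)+q$, lands in $\overline{\phi^{-1}[C^u]}$. Some such bespoke construction is unavoidable. Finally, $Q^*$ near zero has no ultrafilter characterization available in this setting, and the paper proves the $Q^*$ and $P^*$ cases by direct Ramsey and van der Waerden arguments (which, notably, require only that $M$ have no zero row, not that it be IPR); your template does not reach these two cases at all.
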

\section{Preliminary Results}
 
For this paper, we need the algebraic structure of the Stone-Cech Compactification, denoted by $\beta S$, of a discrete semigroup $(S,+)$. We let $\beta S=\{ p: p$ is an ultrafilter on $S\}$ and identify the principal ultrafilters on $S$ with the points of $S$ and so we may assume $S \subseteq \beta S$. Given $A \subseteq S$, $\overline{A}=\{p \in \beta S: A \in p\}$, where $\overline{A}$  denotes the closure of $A$ in $\beta S$ and $\{\overline{A}: A \subseteq S\}$ is a basis for the topology of $\beta S$. The operation $+$ on $S$ extends to an operation, also denoted by $+$, on $\beta S$ so that $\beta S$ is a right topological semigroup with $S$ contained in the topological centre of $\beta S$. That is, for each $p \in \beta S$, the function $\rho_p: \beta S \rightarrow \beta S$ defined by $\rho_p(q)=q+p$ is continuous and for each $x \in S$, the function $\lambda_x: \beta S \rightarrow\beta S$ defined by $\lambda_x(q)=x+q$ is continuous. Given $p,q \in \beta S, A \in p+q$ if and only if $\{x \in S: -x+A \in q\} \in p$, where $-x+A=\{y \in S: x+y \in A\}$. Since $\beta S$ is a compact Hausdorff right topological semigroup, it contains idempotents and a smallest two sided ideal, denoted by $K(\beta S)$, which is the union of all of the minimal left ideals of $\beta S$ and also the union of all the minimal right ideals of $\beta S$. The detailed algebraic structure of $\beta S$ is available in \cite{HSc}.
In this section, we prove some results for subsets $E$ of $S^v, v \ge 1$ where $S$ is a dense subsemigroup of $((0, \infty),+)$. We consider $S$ to be a dense subsemigroup of $((0,\infty),+)$, where $\textit
{dense}$ means with respect to usual topology on $((0,\infty),+)$ and when we use the Stone Cech Compactification of such a semigroup $S^v, v \ge 1$ then we deal with $S_d^v$ which is the set $S^v$ with the discrete topology. We also prove some results which will be useful to prove Theorem \ref{T1}.\\
We start with the following definition.
\begin{definition} \label{defA} Let $S$ be a dense subsemigroup of $((0,\infty),+)$ and $v \ge 1$. Then 
$0^+(S^v)=\{ p\in\beta (S_d^v): (\forall \epsilon>0)(((0,\epsilon) \cap S)^v \in p)\}$.  
\end{definition}
The following is Lemma 2.2 of \cite{KR}.
\begin{lemma} \label{l} Let $S$ be a dense subsemigroup of $((0,\infty),+)$, then $0^+(S^v)$, $v \ge 1$ is a compact right topological subsemigroup. 
\end{lemma}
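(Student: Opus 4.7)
The plan is to verify three properties in turn: nonemptiness of $0^+(S^v)$, its closedness in $\beta(S_d^v)$ (which yields compactness), and its closure under the extended operation $+$. Once closure under $+$ is established, the right topological property is automatically inherited from $\beta(S_d^v)$, since for $p \in 0^+(S^v)$ the continuous map $\rho_p$ on $\beta(S_d^v)$ restricts to a self-map of $0^+(S^v)$.

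First I would handle nonemptiness and closedness together. For each $\epsilon > 0$, write $A_\epsilon = ((0,\epsilon) \cap S)^v$. The family $\{A_\epsilon : \epsilon > 0\}$ is closed under finite intersection (since $A_{\epsilon_1} \cap \cdots \cap A_{\epsilon_n} = A_{\min_i \epsilon_i}$), and each $A_\epsilon$ is nonempty because $S$ is dense in $(0,\infty)$. So the family has the finite intersection property and extends to an ultrafilter lying in $0^+(S^v)$. For closedness, observe that
\[
0^+(S^v) \;=\; \bigcap_{\epsilon > 0} \overline{A_\epsilon},
\]
an intersection of clopen subsets of the compact Hausdorff space $\beta(S_d^v)$; hence $0^+(S^v)$ is closed, and therefore compact.

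The technical heart is closure under $+$. Given $p, q \in 0^+(S^v)$ and $\epsilon > 0$, I need $A_\epsilon \in p+q$, which by the standard formula for addition on $\beta(S_d^v)$ is equivalent to $\{\vec{x} \in S^v : -\vec{x} + A_\epsilon \in q\} \in p$. The key observation is componentwise: if $\vec{x} \in A_{\epsilon/2}$, then each $x_i < \epsilon/2$, so $\epsilon - x_i > \epsilon/2$, and consequently $A_{\epsilon/2} \subseteq -\vec{x} + A_\epsilon$. Since $q \in 0^+(S^v)$ gives $A_{\epsilon/2} \in q$, this yields $-\vec{x} + A_\epsilon \in q$ for every $\vec{x} \in A_{\epsilon/2}$. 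Therefore $A_{\epsilon/2} \subseteq \{\vec{x} \in S^v : -\vec{x} + A_\epsilon \in q\}$, and since $A_{\epsilon/2} \in p$ I conclude $A_\epsilon \in p+q$, as required.

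The main obstacle, mild as it is, is this last computation: one must carefully track how vector translation interacts with the product structure of the basic neighborhoods $A_\epsilon$, ensuring that the ``shrink $\epsilon$ by half'' trick familiar from the scalar case $v=1$ survives componentwise. Once this is noted, the rest is routine bookkeeping using the compact Hausdorff right topological structure of $\beta(S_d^v)$.
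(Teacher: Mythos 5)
Your proof is correct: the finite intersection property argument gives nonemptiness, the representation $0^+(S^v)=\bigcap_{\epsilon>0}\overline{A_\epsilon}$ gives closedness (hence compactness), and the componentwise ``halve $\epsilon$'' computation correctly shows $A_{\epsilon/2}\subseteq\{\vec{x}\in S^v:-\vec{x}+A_\epsilon\in q\}$, so $A_\epsilon\in p+q$; the right topological structure is then inherited from $\beta(S_d^v)$. The paper itself gives no proof of this lemma, citing only \cite{KR}, and your argument is the standard one (the $v=1$ case of Hindman--Leader carried over coordinatewise), so there is nothing to flag.
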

As a consequence of Lemma \ref{l}, $0^+(S^2)$  contains idempotents. The follwing lemma is a very useful result and can be found in \cite[Lemma 2.4]{KR}. 
\begin{lemma}
     Let $S$ be a dense subsemigroup of $((0,\infty),+)$ then for $v \ge 1$,\\  $0^+(S^v) \bigcap K(\beta S^v) = \phi$.
\end{lemma}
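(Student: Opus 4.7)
The plan is to use the well-known characterization that any ultrafilter $p \in K(\beta T)$ has the property that every member of $p$ is piecewise syndetic in $T$. Concretely, for any $p \in 0^+(S^v)$ I will exhibit a member of $p$ that fails to be piecewise syndetic in the discrete semigroup $S^v_d$, which forces $p \notin K(\beta S^v)$.

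First, I would fix an arbitrary $p \in 0^+(S^v)$ and any $\epsilon > 0$. By Definition \ref{defA}, the set $A_\epsilon := ((0,\epsilon) \cap S)^v$ belongs to $p$. The heart of the argument is then to show that $A_\epsilon$ is not piecewise syndetic in $S^v_d$. Suppose, for contradiction, that it were; then there would exist $G \in \mathcal{P}_f(S^v)$ such that for every $F \in \mathcal{P}_f(S^v)$ one can find $\vec{x} \in S^v$ with $F + \vec{x} \subseteq \bigcup_{\vec{g} \in G} (-\vec{g} + A_\epsilon)$.

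The key one-line observation is that each translate $-\vec{g} + A_\epsilon$ is coordinate-wise bounded by $\epsilon$: if $\vec{y} \in -\vec{g} + A_\epsilon$ then $g_i + y_i < \epsilon$ for each $i$, and since $g_i > 0$ this gives $y_i < \epsilon$. Hence $\bigcup_{\vec{g}\in G}(-\vec{g} + A_\epsilon) \subseteq A_\epsilon$. Now since $S$ is a dense subsemigroup of $((0,\infty),+)$ it is unbounded (closure under addition forces $ns \in S$ for every $s \in S$ and $n \in \ben$), so we may pick $z \in S$ with $z > \epsilon$ and set $F := \{(z,z,\ldots,z)\}$. For every $\vec{x} \in S^v$ the vector $(z+x_1,\ldots,z+x_v)$ has each coordinate strictly greater than $\epsilon$, so $F + \vec{x}$ cannot possibly lie in the bounded set $A_\epsilon$. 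This contradiction shows that $A_\epsilon$ is not piecewise syndetic, and therefore $p \notin K(\beta S^v)$.

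Since $p \in 0^+(S^v)$ was arbitrary we obtain $0^+(S^v) \cap K(\beta S^v) = \emptyset$, as required. There is no serious obstacle in this argument: the only verification needed is the coordinate-wise boundedness of the union of translates, which is essentially immediate from the definition. The proof is really a direct application of the piecewise-syndetic characterization of $K(\beta T)$ combined with the elementary geometric fact that translates of a bounded set by positive elements remain bounded.
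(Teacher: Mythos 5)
Your proof is correct: the paper does not reproduce an argument for this lemma (it only cites \cite[Lemma 2.4]{KR}), and your route---every member of an ultrafilter in $K(\beta S^v)$ is piecewise syndetic by \cite[Theorem 4.40]{HSc}, while $((0,\epsilon)\cap S)^v$ cannot be piecewise syndetic because all translates $-\vec{g}+A_\epsilon$ by elements of $S^v$ stay inside the bounded set $A_\epsilon$ and $S$ is unbounded---is the standard argument that the cited reference uses. All the steps, including the unboundedness of $S$ from closure under addition and the reduction to the single-element set $F=\{(z,\ldots,z)\}$ with $z>\epsilon$, check out.
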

Let $S$ be a dense subsemigorup of $((0,\infty),+)$. Let $A \subseteq S^v$ for $v \ge 1$. The set of sequences in $S^v$ converging to zero is denoted by $\mathcal{T}_0^v$.
 In the following definitions, $\overline{A}$ denotes the closure of $A$ in $0^+(S^v)$ and $E(A)$ denotes the set of idempotents in $A$. We recall the following definitions from \cite{BT} and \cite{DP}.
\begin{definition}  
\begin{itemize}
     \item [(1)]  $A$ is an IP set near zero if and only if there exists an idempotent $p \in 0^+(S^v)$ such that $A \in p$. 
     \item [(2)] $A$ is $IP^*$ set near zero if and only if $E(0^+(S^v)) \subseteq \overline{A}$.
     \item [(3)] $A$ is a  central$^*$ set near zero if and only if $E(K(0^+(S^v))) \subseteq \overline{A}$.
     \item[(4)] $A$ is a J-set near zero in $S^v$ iff whenever $F \in \mathcal{P}_f(\mathcal{T}_0^v)$ and $\delta > 0$, there exists $\vec{a} \in S^v \cap (0,\delta)^v$ and $H \in \mathcal{P}_f(\mathbb{N})$ such that for each $\vec{f} \in F,\, \vec{a} +  \sum_{t \in H}\vec{f}(t) \in A$.    
    \item[(5)] $J_0(S^v)=\{ p\in 0^+(S^v): \forall A \in p, A $ is a J-set near zero \}.     
\end{itemize}
\end{definition}

We now define some notions of largeness near zero.
\begin{definition} 
\begin{itemize}
 \item [(1)] $A$ is a $P$ set near zero in $S^v$ if and only if for every $k \in \mathbb{N}, \epsilon>0, A \cap (0, \epsilon)^v$ contains a length $k$ Arithmetic Progression(AP).
\item [(2)] $Prog(S_0^v)=\{ p \in 0^+(S^v): \forall \; A \in p, A$ is a P set near zero\}.
 \item [(3)]  $A$ is a Q set near zero if and only if there exists a sequence $\{\vec{x}_n\}_{n=1}^\infty$ in $S^v$ such that $ \sum_{n=1}^\infty \vec{x}_n$ converges and whenever $m<n, \vec{x}_n \in \vec{x}_m +A$.
 \item [(4)] $A$ is a strongly central $(SC)$ set near zero if and only if for every minimal left ideal $L$ of $0^+(S^v)$ there is an idempotent in $L\cap \overline{A}$.  
 \item [(5)] $A$ is a strongly central$^*(SC^*)$ set near zero if and only if there is a minimal left ideal $L$ of $0^+(S^v)$ with $E(L) \subseteq \overline{A}$.   
 \item [(6)]$A$ is a thick set near zero if and only if there is a left ideal $L$ of $0^+(S^v)$ such that  $L \subseteq \overline{A}$.
 \item[(7)]$A$ is a Piecewise Syndetic$^*$(PS$^*)$ set near zero if and only if $K(0^+(S^v)) \subseteq \overline{A}$.
 \item [(8)] $A$ is $C^*$ set near zero if and only if $E(J_0(S^v)) \subseteq \overline{A}$. 
 \item [(9)] $A$ is a Quasi Central(QC) set near zero if and only if there is an idempotent in $Cl_{0^+(S^v)} K(0^+(S^v)) \cap \overline{A}$.
  \item [(10)] $A$ is a Quasi Central$^*$(QC$^*)$ set near zero if and only if \sloppy $E(Cl_{0^+(S^v)}K(0^+(S^v))) \subseteq \overline{A}$.     
\item [(11)] $A$ is $J^*$ set near zero if and only if $J_0(S^v) \subseteq \overline{A}$. 
\item [(12)] $A$ is a $Q$* set near zero or a $P$* set near zero if and only if $S^v \setminus A$ is not a $Q$ set near zero or not a $P$ set near zero respectively.
    \end{itemize}
    
\end{definition}
The notions of P set and Q set were proved to be partition regular in \cite{Ha}. We prove in the following theorems that their analogous counterparts, P set near zero and Q set near zero are also partition regular. 
\begin{theorem} Let $S$ be a dense subsemigorup of $((0,\infty),+)$. The property P set near zero is partition regular. Consequently, if $A\subseteq S$ and A is a P set near zero then $\overline{A} \cap Prog(S_0) \neq \emptyset$. $Prog(S_0)$ is a compact two sided ideal of $0^+(S)$.  
\end{theorem}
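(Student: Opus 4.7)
My plan has three parts, one for each assertion. First, I would deduce partition regularity of the P-set-near-zero property from van der Waerden's theorem. If $A$ is a P set near zero and $A=\bigcup_{i=1}^r A_i$ with no $A_i$ a P set near zero, pick witnesses $(k_i,\epsilon_i)$, set $k=\max_i k_i$, $\epsilon=\min_i\epsilon_i$, and let $N=W(r,k)$. Since $A\cap(0,\epsilon)$ contains an $N$-term AP, applying van der Waerden to the induced $r$-coloring of that AP produces a monochromatic $k$-AP inside some $A_i\cap(0,\epsilon)$, contradicting the choice of $(k_i,\epsilon_i)$.

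Second, to show $\overline{A}\cap Prog(S_0)\ne\emptyset$ when $A$ is a P set near zero, I would use Zorn's lemma. Let $\mathcal{H}$ be the collection of families $\mathcal{F}\subseteq\mathcal{P}(S)$ that contain $A$ and every $(0,\epsilon)\cap S$ and whose every finite intersection is a P set near zero. The initial family $\{A\}\cup\{(0,\epsilon)\cap S:\epsilon>0\}$ is in $\mathcal{H}$, and chains admit unions as upper bounds, so Zorn provides a maximal $\mathcal{F}_0$. To see that $\mathcal{F}_0$ generates an ultrafilter, I would argue that for any $B\subseteq S$ either $B$ or $S\setminus B$ lies in $\mathcal{F}_0$: otherwise, finite intersections $H_1,H_2$ from $\mathcal{F}_0$ would witness that neither $B\cap H_1$ nor $(S\setminus B)\cap H_2$ is a P set near zero, but then the P set near zero $H_1\cap H_2$ partitions into two non-P sets near zero, contradicting step one. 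The resulting ultrafilter $p$ contains $A$ and every $(0,\epsilon)\cap S$, so $p\in\overline{A}\cap 0^+(S)$; every member of $p$ extends some finite intersection from $\mathcal{F}_0$, hence is a P set near zero (a property closed under supersets), so $p\in Prog(S_0)$.

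Third, for each $B$ that is not a P set near zero the set $\{p\in 0^+(S):B\in p\}$ is open in $0^+(S)$, and $Prog(S_0)$ is the complement of the union of these sets, hence closed in the compact space $0^+(S)$. For the two-sided ideal property, fix $p\in Prog(S_0)$, $q\in 0^+(S)$, $k\in\ben$ and $\epsilon>0$. For $p+q\in Prog(S_0)$ and $A\in p+q$, the set $B=\{x\in S:-x+A\in q\}$ is in $p$ and hence a P set near zero; pick a $k$-AP $\{a+id\}_{i=0}^{k-1}$ in $B\cap(0,\epsilon/2)$, then choose $y\in\bigcap_{i=0}^{k-1}(-(a+id)+A)\cap(0,\epsilon/2)\cap S$ (nonempty because it lies in $q$) to obtain a $k$-AP $\{y+a+id\}_{i=0}^{k-1}$ in $A\cap(0,\epsilon)$. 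For $q+p\in Prog(S_0)$ and $A\in q+p$, pick $x\in\{z:-z+A\in p\}\cap(0,\epsilon/2)$; since $-x+A\in p$ is a P set near zero, $(-x+A)\cap(0,\epsilon/2)$ contains a $k$-AP, which shifts by $x$ into a $k$-AP inside $A\cap(0,\epsilon)$.

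The delicate step is the ultrafilter construction in the second paragraph: one must verify that the condition ``every finite intersection is a P set near zero'' really is preserved under Zorn, and arrange the maximality argument so that the partition regularity from step one fires precisely at the moment of adding $B$ or $S\setminus B$. The ideal arithmetic, by contrast, is a routine ultrafilter computation once the near-zero bookkeeping is set up so that all constructed APs sit inside the target interval $(0,\epsilon)$.
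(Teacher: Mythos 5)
Your proof is correct and follows essentially the same route as the paper: van der Waerden's theorem for partition regularity, an ultrafilter in $0^+(S)$ all of whose members are P sets near zero for the second claim, and the standard closedness and translation arguments for the two-sided ideal property. The only divergence is that you reprove the ultrafilter-existence step from scratch via Zorn's lemma where the paper simply cites Theorem 3.11 of Hindman--Strauss; your explicit reduction to a single uniform witness $(k,\epsilon)$ in the partition-regularity step is in fact slightly more careful than the paper's two-colour argument.
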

\begin{proof}
    Let $A_1 \cup A_2$ be a P set near zero. We will show there exists $i \in \{1,2\}$ such that $A_i$ is a P set near zero, i.e., for each $k \in \ben, \epsilon>0$, there exists $i(k) \in \{1,2\}$ such that $A_{i(k)} \cap (0, \epsilon)$ contains a length $k$ arithmetic progression. By Vander Waerden's theorem, \cite[Corollary 14.3]{HSc}, there exists $n \in \ben$ such that whenever $\{1,2,\cdots,n\}$ is 2-coloured there is a monochromatic length $k$ arithmetic progression. Since $A_1 \cup A_2$ is a P set near zero, for $n \in \ben$ and  $\epsilon > 0$ there exists $ a, d \in S$ such that $\{a+td: t \in \{1,2,\cdots,n\}\} \subseteq (A_1 \cup A_2) \cap (0,\epsilon)$. Define for each $i \in \{1,2\}, B_i=\{ t \in \{1,2,\cdots,n\}: a+td \in A_i \cap (0,\epsilon)\}$. Let $i(k) \in \{1,2\}$ such that $B_{i(k)}$ contains length $k$ arithmetic progression. Then there exists $e,f \in S$ such that $\{e+sf:s\in \{1,2,\cdots,k\}\} \subseteq B_{i(k)}$. Then for $s \in \{1,2,\cdots,k\}, a+(e+sf)d \in A_{i(k)} \cap (0, \epsilon)$. Let $a'=a+ed, d'=fd$ then, $\{a'+sd': s \in \{1,2,\cdots,k\}\} \subseteq A_{i(k)} \cap (0,\epsilon)$.
    
     Let $\mathcal{R}=\{A \subseteq S: A$ is a P set near zero$\}, \mathcal{R^\uparrow}=\{B \subseteq S: A \subseteq B$ for some $A \in \mathcal{R}\}$. Then by \cite[Theorem 3.11]{HSc}, there exists $q \in 0^+(S)$ such that $A \in q \subseteq \mathcal{R^\uparrow}$ and so $q \in Prog(S_0)$. Thus $Prog(S_0) \neq \emptyset$. We will now prove that $Prog(S_0)$ is a closed subset of $0^+(S)$. Let $q \in 0^+(S) \setminus Prog(S_0)$. Then there exists $B \in q$ such that $B$ is not a P set near zero. If $\overline{B} \cap Prog(S_0) \neq \emptyset$, there exists $r \in Prog(S_0)$ such that $B \in r$  which implies $B$ is a P set near zero which is a contradiction. Therefore $q$ is not a limit point of $Prog(S_0)$. Thus $Prog(S_0)$ is a compact set in $0^+(S)$.
   
    We now prove that $Prog(S_0)$ is a two sided ideal of $0^+(S)$. Let $p \in Prog(S_0), q \in 0^+(S), A \in q+p, k \in \ben, \epsilon >0$. We will prove $A \cap (0,\epsilon)$ contains a length $k$ arithmetic progression. Since $A \in q+p \implies A \in \rho_p(q)$ and since $\rho_p$ is continuous, there exists $B \in q$ such that $\rho_p(\overline{B}) \subseteq \overline{A}$. Since $q \in 0^+(S)$, for all $\epsilon > 0, (0,\epsilon) \cap B \in q$. So pick $x \in B$ such that $x < \epsilon$. Then $\lambda_x(p)=x+p=\rho_p(x) \in \overline{A}$. Pick $C \in p$ such that $\lambda_x(\overline{C}) \subseteq \overline{A}$, i.e., $\lambda_x (C)\subseteq A$. Since $C \in p$, C is a P set near zero and so for $\epsilon'=\epsilon-x, C \cap (0,\epsilon')$ contains a length $k$ arithmetic progression. Choose $a,d \in S$ such that  $\{a+td: t \in \{1,2,\cdots,k\}\} \subseteq C \cap (0,\epsilon')$. For all $t \in \{1,2,\cdots,k\}, x+a+td \in x+C \subseteq A$ and $x+a+td \in (x,x+\epsilon') \subseteq (0,\epsilon)$. Let $x+a=a'$, then $\{a'+td: t \in \{1,2,\cdots,k\}\} \subseteq A \cap (0,\epsilon)$. \\
    To prove $Prog(S_0)$ is a right ideal of $0^+(S)$, let $p \in Prog(S_0), q \in 0^+(S), A \in p+q, k \in \ben, \epsilon >0$. We will prove $A \cap (0,\epsilon)$ contains length $k$ arithmetic progression. Since $A \in p+q \implies A \in \rho_q(p)$ and since $\rho_q$ is continuous, pick $B \in p$ such that $\rho_q(\overline{B}) \subseteq \overline{A}$. Now $B$ is a P set near zero so $B \cap (0,\frac{\epsilon}{2})$ contains a length $k$ arithmetic progression. Pick $a, d \in S$ such that  $\{a+td: t \in \{1,2,\cdots,k\}\} \subseteq B \cap (0, \frac{\epsilon}{2})$. For each $t \in \{1,2,\cdots,k\}$, let $x(t)=a+td$, then $x(t) \in B \cap (0, \frac{\epsilon}{2})$. Now, $\lambda_{x(t)}(q)=x(t)+q=\rho_q(x(t)) \in \overline{A}$. Since $\lambda_{x(t)}$ is continuous, pick $C_t \cap (0,\frac{\epsilon}{2})\in q$ such that $\lambda_{x(t)}(\overline{C_t \cap (0,\frac{\epsilon}{2})}) \subseteq \overline{A}$. Let $z \in \bigcap_{t=1}^k (C_t \cap (0,\frac{\epsilon}{2}))$, then $x(t) +z \in A$, i.e., $a+td+z \in A$ for all $t \in \{1,2,\cdots,k\}$. Now, $a+td+z \in (0,\epsilon)$. Let $a'=a+z$, then $\{a'+td: t \in \{1,2,\cdots,k\}\} \subseteq A \cap (0, \epsilon)$.
    
\end{proof}

\begin{theorem}
 Let $S$ be a dense subsemigorup of $((0,\infty),+)$. The property $Q$ set near zero is partition regular.   
\end{theorem}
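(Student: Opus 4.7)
The plan is to prove partition regularity by combining the witnessing sequence for the $Q$ property with the infinite Ramsey theorem for pairs, and then extracting a monochromatic subsequence. It suffices to handle a two-cell partition, since the general finite case follows by iteration.

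Assume $A_1 \cup A_2$ is a $Q$ set near zero in $S^v$, witnessed by a sequence $\{\vec{x}_n\}_{n=1}^{\infty}$ in $S^v$ with $\sum_{n=1}^{\infty} \vec{x}_n$ convergent and with $\vec{x}_n \in \vec{x}_m + (A_1 \cup A_2)$ for every $m<n$. For each pair $m<n$ let $\vec{a}_{m,n}$ denote the (unique) element with $\vec{x}_n = \vec{x}_m + \vec{a}_{m,n}$, and choose $\phi(\{m,n\}) \in \{1,2\}$ so that $\vec{a}_{m,n} \in A_{\phi(\{m,n\})}$. The infinite Ramsey theorem applied to the $2$-coloring $\phi\colon [\ben]^2 \to \{1,2\}$ yields an infinite set $H = \{h_1 < h_2 < \cdots\} \subseteq \ben$ and an index $i \in \{1,2\}$ such that $\phi(\{h_k, h_\ell\}) = i$ whenever $k < \ell$.

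Set $\vec{y}_k = \vec{x}_{h_k}$. Because each $\vec{x}_n$ has coordinates in $S \subseteq (0,\infty)$ and $\sum_n \vec{x}_n$ converges, the subseries $\sum_k \vec{y}_k$ is dominated coordinate-wise by the original series and therefore also converges. For $k<\ell$ the monochromaticity of $H$ gives $\vec{a}_{h_k,h_\ell} \in A_i$, i.e.\ $\vec{y}_\ell \in \vec{y}_k + A_i$. Thus $\{\vec{y}_k\}_{k=1}^\infty$ witnesses that $A_i$ is a $Q$ set near zero, establishing partition regularity. The Ramsey step is routine; the one mild point that needs care is that convergence of the series is inherited upon passage to an arbitrary subsequence, and this is exactly the place where the positivity hypothesis $S \subseteq (0,\infty)$ enters. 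I do not anticipate any deeper obstacle: the argument runs in parallel to the standard proof that Hindman-type IP families are partition regular, with convergence handled once by the domination observation above.
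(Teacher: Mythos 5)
Your proposal is correct and follows essentially the same route as the paper's proof: color each pair $\{m,n\}$ by which cell contains $\vec x_n-\vec x_m$, apply the infinite Ramsey theorem for pairs, and pass to the monochromatic subsequence, whose series still converges. The only (welcome) addition is that you make explicit the domination argument for convergence of the subseries, which the paper leaves implicit.
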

\begin{proof}
    Let $A_1, A_2 \subseteq S$ such that $A_1 \cup A_2$ is a Q set near zero. We will show there exists $i \in \{1,2\}$ such that $A_i$ is a Q set near zero. Let $\{x_n\}_{n=1}^\infty$ be a sequence in $S$ such that $ \sum_{n=1}^\infty x_n$ converges and whenever $m<n, x_n \in x_m +(A_1 \cup A_2)$. Let $N'=\{m,n \in \ben: m<n    \; \text{and} \; x_n \in x_m +(A_1 \cup A_2)\}$. then $N'$ is an infinite subset of $\ben$. Let $[N']^2=B_1 \cup B_2$ where for each $i \in \{1,2\}, B_i=\{\{m,n\}: m,n \in \ben, m<n \; \text{and} \; x_n \in x_m+A_i\} $. Using Ramsey's Theorem, there exists $i \in \{1,2\}$ and an infinite subset $A$ of $N'$ such that $\{\{m,n\}: m,n \in A, m\neq n\} \subseteq B_i$. Write $A$ in increasing order as $A=\{t_k\}_{k=1}^\infty$. For any $t_k < t_{k'}, x_{t_{k'}} \in x_{t_k}+A_i$. For $k \in \ben$, take $y_k=x_{t_k}$, then $\sum_{k=1}^\infty{y_k}$ converges and for any $m < n, y_n \in y_m + A_i$. 
\end{proof}

\section{preservation of notions of large sets near zero}
As was mentioned in section 1, in this section we will prove Theorem \ref{T1} for various notions of large sets near zero. We begin with the following theorem which follows easily from \cite[Theorem 3.1]{HL}
\begin{theorem} Let $S$ be a dense subsemigroup of $((0,\infty),+)$ and let $A \subseteq S$. Then there exists $p=p+p$ in $0^+(S)$ with $A \in p$ if and only if for $\epsilon > 0$, there is some sequence $\{x_n\}_{n=1}^\infty$ in $(0,\epsilon)$ such that $\sum_{n=1}^\infty x_n$ converges and $FS(\{x_n\}_{n=1}^\infty) \subseteq A$.
\end{theorem}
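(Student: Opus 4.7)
The plan is to prove this by imitating the Galvin--Glazer type characterization of IP sets inside $\beta S_d$, taking care to work inside $0^+(S)$ so that all sequences chosen tend to zero and their sums are arbitrarily small. The two directions are essentially independent, so I would handle them separately.

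For the forward direction, assume $p = p+p \in 0^+(S)$ with $A \in p$. Fix $\epsilon > 0$. I would first set $A^* = \{x \in A : -x + A \in p\}$ and recall the standard fact that $A^* \in p$ and, for every $x \in A^*$, $-x + A^* \in p$. Because $p \in 0^+(S)$, we also have $(0,\delta)\cap S \in p$ for every $\delta > 0$. I would now choose the sequence $\{x_n\}$ inductively: take $x_1 \in A^* \cap (0,\epsilon/2)$, which is nonempty as both sets lie in $p$. Given $x_1,\dots,x_n$ with every $\sum_{i \in F} x_i \in A^*$ for $\emptyset \neq F \subseteq \{1,\dots,n\}$ and $x_i < \epsilon/2^i$, choose
\[
x_{n+1} \in A^* \cap (0,\epsilon/2^{n+1}) \cap \bigcap_{\emptyset \neq F \subseteq \{1,\dots,n\}} \bigl(-\textstyle\sum_{i \in F} x_i + A^*\bigr),
\]
an intersection of finitely many members of $p$, hence nonempty. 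This yields $x_n \in (0,\epsilon)$, $\sum_{n=1}^\infty x_n \le \epsilon < \infty$, and $FS(\{x_n\}_{n=1}^\infty) \subseteq A^* \subseteq A$.

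For the reverse direction, fix $\epsilon > 0$ and a sequence $\{x_n\}$ in $(0,\epsilon)$ with $\sum x_n$ convergent and $FS(\{x_n\}_{n=1}^\infty) \subseteq A$, and set
\[
T = \bigcap_{m=1}^\infty \overline{FS(\{x_n\}_{n=m}^\infty)},
\]
closures in $\beta S_d$. Then $T$ is nonempty (any ultrafilter extending the filter generated by the tails works), compact, and I would verify it is a subsemigroup by the standard trick: given $p,q \in T$ and $m \in \ben$, any $y = \sum_{i \in F} x_i$ with $F \subseteq \{m,m+1,\dots\}$ finite satisfies $FS(\{x_n\}_{n=\max F+1}^\infty) \subseteq -y + FS(\{x_n\}_{n=m}^\infty)$, the former is in $q$, so $-y + FS(\{x_n\}_{n=m}^\infty) \in q$; this shows $FS(\{x_n\}_{n=m}^\infty) \subseteq \{y : -y + FS(\{x_n\}_{n=m}^\infty) \in q\}$, and the latter set lies in $p$, giving $FS(\{x_n\}_{n=m}^\infty) \in p+q$. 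Moreover, $T \subseteq 0^+(S)$, because for any $\delta > 0$ convergence of $\sum x_n$ gives $M$ with $\sum_{n=M}^\infty x_n < \delta$, so $FS(\{x_n\}_{n=M}^\infty) \subseteq (0,\delta) \cap S$, forcing $(0,\delta)\cap S \in p$ for every $p \in T$. By Ellis's theorem applied to the compact right topological semigroup $T$, there is an idempotent $p \in T$, and since $FS(\{x_n\}_{n=1}^\infty) \in p$ and this FS set is contained in $A$, we get $A \in p$.

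The main potential obstacle is not really conceptual; it is the bookkeeping needed to keep $\{x_n\}$ inside $(0,\epsilon)$ with controllable tail sums while still remaining inside $A^*$ in the forward direction, and the semigroup verification of $T$ in the reverse direction. Both are handled by the geometric choice $x_n < \epsilon/2^n$ and by the classical Galvin--Glazer tail argument respectively. Given that Theorem~3.1 of \cite{HL} is cited, one may alternatively appeal to that result directly, but the proof above makes the adaptation to $0^+(S)$ completely self-contained.
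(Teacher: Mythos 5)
Your proof is correct and follows essentially the same route as the paper, which simply cites the Galvin--Glazer/Ellis idempotent argument of \cite[Theorem 3.1]{HL} and remarks that the sequence there can be taken inside $(0,\epsilon)$. Your contribution is to write that argument out in full, with the $x_n<\epsilon/2^n$ bookkeeping in the forward direction and the verification that $T\subseteq 0^+(S)$ in the reverse direction making explicit exactly the adaptation the paper leaves implicit.
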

\begin{proof}
    A sequence $\{x_n\}_{n=1}^\infty$ can be chosen in $(0,\epsilon)$ in the proof of \cite[Theorem 3.1]{HL}.
\end{proof}
The following theorems, Theorems \ref{th3} and \ref{th4}, are needed to prove Theorem \ref{T1} which are slightly stronger versions of Theroem 2.2 and Theorem 2.3 of \cite{DPb}. 
\begin{theorem} \label{th3}
    Let $u,v \in \, \ben, M$ be a $u \times v$ first entries  matrix with entries from $\ber$ and let $C$ be central set near-zero in $\ber^+$. Let $\epsilon > 0$, then there exists sequences \sloppy $\{x_{1,n}\}_{n=1}^\infty, \{x_{2,n}\}_{n=1}^\infty, \cdots, \{x_{v,n}\}_{n=1}^\infty$ in $\ber^+$ such that for any $i \in \{1,2,\cdots,v\}, \sum_{t=1}^{\infty} x_{i,t}$ converges and for each \sloppy $F \in \mathcal{P}_f(\mathbb{N}), M \vec{x}_F \in C^u$, where $\vec{x}_F=\begin{pmatrix}
     \sum_{t \in F} x_{1,t}\\
     \vdots\\
     \sum_{t \in F} x_{v,t}
    \end{pmatrix}$ $\in (0,\epsilon)^v$. 
\end{theorem}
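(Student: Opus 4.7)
My strategy is to adapt the proof of \cite[Theorem 2.2]{DPb} by installing magnitude control at each step of the construction, so as to obtain the two additional features of the present strengthening: convergence of every series $\sum_{t=1}^{\infty} x_{i,t}$ and containment of every partial sum $\vec{x}_F$ in $(0,\epsilon)^v$. Since $C$ is central near zero in $\ber^+$, fix an idempotent $p \in K(0^+(\ber^+))$ with $C \in p$. For any $A \in p$, the derived set $A^{\star} = \{y \in \ber^+ : -y + A \in p\}$ also lies in $p$ (because $p$ is idempotent), and since $p \in 0^+(\ber^+)$ every member of $p$ meets each interval $(0,\eta)$. Consequently, at any stage of the construction we may select new positive terms that are simultaneously arbitrarily small and keep prescribed finite sums inside $C$.

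The argument proceeds by induction on the number of rows $u$. For the base case $u = 1$, write $M = (m_1,\ldots,m_v)$ with first nonzero entry $m_{j_0} > 0$, and build the sequences stage by stage in $n$. At stage $n$ the finitely many existing sums $\sum_j m_j \sum_{t \in F} x_{j,t}$ with $F \subseteq \{1,\ldots,n-1\}$ already belong to $C$; choose $x_{j,n}$ for $j \neq j_0$ first, each below $\epsilon/(2^n v(1+|m_j|))$, making their contributions to any new sum arbitrarily small, and then choose $x_{j_0,n}$ last as a sufficiently small element of an intersection of finitely many sets of the form $A^{\star} \cap A$ associated with $p$, so that every new finite sum (one with $n \in F$) lies in $C$; also impose $x_{j_0,n} < \epsilon/(2^n v(1+|m_{j_0}|))$. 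This enforces $\sum_{t=1}^{\infty} x_{j,t} < \epsilon/v$ for each $j$, hence convergence of each series and $\vec{x}_F \in (0,\epsilon)^v$ for every $F$.

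For the inductive step, exploit the first entries structure to reduce to a smaller matrix: let $c > 0$ be the smallest first-entry value of $M$ and let $j_0$ be a column where $c$ appears as a first entry; by the first entries hypothesis, every row whose first nonzero entry lies in column $j_0$ takes value $c$ there. One isolates one such row and forms a $(u-1) \times v$ first entries matrix $M'$ out of the remaining rows. Apply the inductive hypothesis to $M'$ with a suitably smaller $\epsilon'$ to obtain sequences $\{x_{j,n}\}$ fulfilling the $(u-1)$-row conclusion, then use the near-zero Central Sets Theorem of \cite{BT} (equivalently, the idempotent structure of $p$) to pass to a subsystem indexed by sets $\{H_n\}_{n=1}^{\infty} \in \mathcal{P}_f(\ben)$ with $\max H_n < \min H_{n+1}$, so that the corresponding compressed sequences additionally satisfy the isolated row's constraint; magnitudes remain controlled since each $H_n$ is finite and the original $x_{j,t}$ were chosen summable below $\epsilon'$.

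\textbf{Main obstacle.} The principal difficulty is the simultaneous coordination of the combinatorial requirement (each $M \vec{x}_F \in C^u$) with the metric requirement (smallness for series convergence and $(0,\epsilon)^v$-containment). This is resolved because $p \in 0^+(\ber^+)$: every $A \in p$ meets every interval $(0,\eta)$, so at each step the combinatorially constrained subsets of $\ber^+$ in which we must pick new entries still contain positive elements as small as we wish, letting us enforce an exponentially decaying bound without sacrificing $C$-membership.
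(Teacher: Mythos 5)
Your plan misses the short route the paper actually takes: \cite[Lemma 2.2]{DPb} already produces sequences $\{x_{i,n}\}_{n=1}^{\infty}$ of positive reals with each $\sum_{t=1}^{\infty}x_{i,t}$ convergent and $M\vec{x}_F\in C^u$ for every $F$. Since the terms are positive and there are only finitely many series, one may pick $N$ with $\sum_{t\ge N}x_{i,t}<\epsilon$ for all $i$ and pass to the common tail $y_{i,n}=x_{i,N+n-1}$; finite sums of the tail are finite sums of the original sequences, so every image stays in $C^u$ while every $\vec{y}_F$ now lies in $(0,\epsilon)^v$. The ``main obstacle'' you describe --- coordinating the combinatorial and metric requirements --- therefore dissolves without reopening the construction, and no re-proof of the first entries machinery is needed.

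More importantly, the reconstruction you sketch has genuine gaps. In the base case $u=1$ you argue that the columns $j\ne j_0$ contribute amounts that are ``arbitrarily small'' and hence harmless; but a central set near zero is not open, so $s\in C$ gives no information about $s+\delta$ for small $\delta\ne 0$ (and the entries $m_j$ with $j>j_0$ may be negative, so these contributions need not even be positive). The correct device is to treat $y_n=\sum_{j\ne j_0}m_j x_{j,n}$ as prescribed sequences and invoke the Central Sets Theorem near zero to obtain $a_n$ and $H_n$ with $a_n+\sum_{t\in H_n}y_t\in C^{\star}$; choosing $x_{j_0,n}$ from an intersection of sets $A^{\star}\cap A$ does not achieve this. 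In the inductive step, inducting on rows rather than columns runs into the crux of the First Entries Theorem: once you adjust the sequences (new terms in column $j_0$ together with compression along the $H_n$) to satisfy the isolated row, every row of $M'$ having a nonzero entry in column $j_0$ but whose first entry lies in an earlier column is perturbed, and nothing in your argument restores its membership in $C$. The standard proof (as in \cite[Theorem 15.5]{HSc} and \cite{DPb}) inducts on the number of columns precisely so that the rows already handled have a zero entry in the column being adjusted.
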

\begin{proof}
    By \cite[Lemma 2.2]{DPb}, there exists sequences $\{x_{1,n}\}_{n=1}^\infty, \{x_{2,n}\}_{n=1}^\infty, \cdots, \{x_{v,n}\}_{n=1}^\infty$ in $\ber^+$ satisfying the given hypotheses. We can choose subsequences, say, \sloppy $\{y_{1,n}\}_{n=1}^\infty, \{y_{2,n}\}_{n=1}^\infty, \cdots, \{y_{v,n}\}_{n=1}^\infty$ of \sloppy $\{x_{1,n}\}_{n=1}^\infty, \{x_{2,n}\}_{n=1}^\infty, \cdots, \{x_{v,n}\}_{n=1}^\infty$ respectively such that for every $F \in \mathcal{P}_f(\ben)$ and for each $i \in  \{1,2, \cdots, v \}$,$\sum_{t \in F} y_{i,t} \in (0,\epsilon)$ . Take $\vec{x}_F=\begin{pmatrix}
     \sum_{t \in F} y_{1,t}\\
     \vdots\\
     \sum_{t \in F} y_{v,t}
    \end{pmatrix}$. Then $\vec{x}_F \in (0,\epsilon)^v$ such that $M \vec{x}_F \in C^u$.
\end{proof}

\begin{theorem} \label{th4}
    Let $M$ be a $u \times v$ matrix with entries from $\ber$. Then the following are equivalent.\\
    1) $M$ is $IPR/\ber^+$.\\
    2)$M$ is $IPR$ near zero over $\ber^+$.\\
    3) For any central set $C$ in $\ber^+$, there exists $\vec{x} \in (\ber^+)^v$ such that $M \vec{x} \in C^u$.\\
    4) For any central set near zero $C$ in $\ber^+$, there exists $\vec{x} \in (\ber^+)^v$ such that $M \vec{x} \in C^u$.\\
    5) For any central set near zero $C$ in $\ber^+$ and $\epsilon>0$, there exists $\vec{x} \in (0, \epsilon)^v$ such that $M \vec{x} \in C^u$.
\end{theorem}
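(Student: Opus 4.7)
\bigskip

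\noindent\textbf{Proof plan.} My plan is to close a single cycle of implications that absorbs all five statements. The skeleton I will use is
\[
(2)\Rightarrow(1)\Rightarrow(5)\Rightarrow(4)\Rightarrow(1),\qquad (5)\Rightarrow(2),\qquad (1)\Leftrightarrow(3),
\]
so that together with the transparent implication $(5)\Rightarrow(4)$ the five conditions become equivalent. The implication $(2)\Rightarrow(1)$ is immediate, because any finite coloring of $\mathbb{R}^+$ is, in particular, a finite coloring that can be used in the near-zero definition; dropping the interval restriction gives an IPR witness. Similarly $(5)\Rightarrow(4)$ is obtained by discarding the requirement $\vec{x}\in(0,\epsilon)^v$. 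The equivalence $(1)\Leftrightarrow(3)$ is the real-number analogue of Theorem \ref{th1}; it follows by the same argument used in [HSc, Theorem 15.24] applied to $\beta(\mathbb{R}^+_d)$ in place of $\beta\mathbb{N}$.

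\smallskip

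The coloring arguments for $(4)\Rightarrow(1)$ and the companion $(5)\Rightarrow(2)$ are standard. Given a finite partition $\mathbb{R}^+=C_1\cup\cdots\cup C_r$, Lemma \ref{l} tells us that $0^+(\mathbb{R}^+)$ is a compact right topological semigroup, hence $K(0^+(\mathbb{R}^+))$ contains an idempotent $p$; since $p$ is an ultrafilter, exactly one $C_i$ belongs to $p$, and that $C_i$ is then a central set near zero. For $(4)\Rightarrow(1)$ we immediately apply $(4)$ to such a $C_i$. For $(5)\Rightarrow(2)$, given in addition a tolerance $\delta>0$, I set $R=\max_{1\le i\le u}\sum_{j=1}^v|M_{ij}|$ and apply $(5)$ with $\epsilon=\delta/(R+1)$ to the central-near-zero class $C_i$; the resulting $\vec{x}\in(0,\epsilon)^v$ gives $M\vec{x}\in C_i^u\subseteq\mathbb{R}^+$ with every coordinate bounded in absolute value by $R\epsilon<\delta$, so $M\vec{x}\in(-\delta,\delta)^u$ and is monochromatic.

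\smallskip

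The substantive step is $(1)\Rightarrow(5)$, which is where the strengthening of [DPb, Theorem 2.3] lives. My approach mirrors the proof given there: using the Hindman--Leader reduction for IPR matrices [HLb] (see also [HSc]), I produce a $v\times v'$ rational matrix $B$ and a first entries matrix $\widetilde{M}=MB$ such that any solution $\vec{y}$ obtained from the first-entries central-sets machinery yields $\vec{x}=B\vec{y}\in(\mathbb{R}^+)^v$ with $M\vec{x}=\widetilde{M}\vec{y}$. Given a central-near-zero $C\subseteq\mathbb{R}^+$ and $\epsilon>0$, I invoke Theorem \ref{th3} applied to $\widetilde{M}$ with the tighter bound $\epsilon'=\epsilon/(1+\|B\|_1)$ in place of $\epsilon$; the vectors $\vec{x}_F$ produced by Theorem \ref{th3} all lie in $(0,\epsilon')^{v'}$, so pulling back through $B$ keeps the resulting $\vec{x}$ inside $(0,\epsilon)^v$. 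Taking $F=\{1\}$ already gives what we need.

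\smallskip

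The main obstacle is the sign control in the reduction step of $(1)\Rightarrow(5)$: the auxiliary matrix $B$ in the Hindman--Leader reduction generally has entries of mixed sign, so componentwise positivity of $\vec{x}=B\vec{y}$ is not automatic from positivity of $\vec{y}$. The delicate point, which is why Theorem \ref{th3} is stated in terms of FS-type partial sums rather than a single vector, is that one must choose the sequences $\{y_{i,n}\}$ so that the partial sums satisfy the column relations forced by $B$ (this is exactly the role of the conditions defining a first entries matrix with positive first entries). Once this is set up along the lines of [DPb, Lemma 2.2], the near-zero bound $\epsilon'$ propagates through $B$ linearly and yields the required $\vec{x}\in(0,\epsilon)^v$, completing the cycle.
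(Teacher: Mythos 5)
Your skeleton of implications is logically complete, and the easy arrows are fine: $(2)\Rightarrow(1)$ and $(5)\Rightarrow(4)$ are indeed trivial, and your colouring arguments for $(4)\Rightarrow(1)$ and $(5)\Rightarrow(2)$ (some cell of the partition lies in a minimal idempotent of $0^+(\ber^+)$ and is therefore central near zero; for $(5)\Rightarrow(2)$ shrink $\epsilon$ to $\delta/(R+1)$) are correct and in fact more self-contained than the paper, which simply cites \cite[Theorem 2.3]{DPb} for the equivalence of (1)--(4). The genuine gap is in $(1)\Rightarrow(5)$, which you yourself flag but do not close. You reduce to a first entries matrix $\widetilde M=MB$ with $B$ a rational $v\times v'$ matrix of possibly mixed sign, and then the bound $\epsilon'=\epsilon/(1+\|B\|_1)$ only controls $|(B\vec y)_i|<\epsilon$; it does not give $(B\vec y)_i>0$, so $\vec x=B\vec y$ need not lie in $(\ber^+)^v$ at all. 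Your proposed fix --- ``choose the sequences $\{y_{i,n}\}$ so that the partial sums satisfy the column relations forced by $B$'' --- is not an argument: Theorem \ref{th3} (essentially \cite[Lemma 2.2]{DPb}) produces sequences adapted to the first entries structure of $\widetilde M$ and says nothing about the signs of the coordinates of $B\vec y$.

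The missing ingredient is the correct form of the reduction. Since $M$ is IPR$/\ber^+$, \cite[Theorem 4.1]{H} supplies $m\in\{1,\dots,u\}$ and a $v\times m$ first entries matrix $G$ with \emph{non-negative} entries and no zero row such that $MG$ is a first entries matrix with all first entries equal to $1$. With this $G$ in place of your $B$, positivity of $\vec x=G\vec y$ is automatic (no row of $G$ is zero and $\vec y\in(0,\epsilon/(gm))^m$, where $g=\max g_{ij}$), and the magnitude estimate you already wrote down gives $\vec x\in(0,\epsilon)^v$; this is exactly how the paper completes $(1)\Rightarrow(5)$. So your proof becomes correct once you replace the mixed-sign reduction by the non-negative one from \cite{H}; as written, the positivity of $\vec x$ is unestablished.
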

\begin{proof}
    The equivalence of (1),(2),(3) and (4) follows from \cite[Theorem 2.3]{DPb}. (5) implies (1) is trivial. We now prove (1) implies (5). Let $C$ be a central set near zero in $\ber^+$ and $M$ be $IPR/\ber^+$. So by \cite[Theorem 4.1]{H}, there exists $m \in \{1,2,\cdots,u\}$ and a $v \times m$ first entries matrix $G$ with non-negative entries and no row equal to $\vec{0}$ such that, if $B=MG$ then $B$ is a first entries matrix with all of its first entries equal to 1. Let $G=(g_{ij})_{v \times m}$ and $\epsilon>0$ and let $g$=max$\{g_{ij}|1 \le i \le v, 1 \le j \le m \}$. By Theorem \ref{th3}, there exists $\vec{y} \in (0, \frac{\epsilon}{gm})^m$ such that $B\vec{y} \in C^u$, i.e., $MG\vec{y} \in C^u$. Now, $G\vec{y}= \begin{pmatrix}
        g_{11} \cdots g_{1m}\\
        \vdots \\
        g_{v1} \cdots g_{vm}
    \end{pmatrix} \begin{pmatrix}
        y_1\\
        \vdots \\
        y_m
    \end{pmatrix} = \begin{pmatrix}
        g_{11}y_1 + \cdots +g_{1m}y_m\\
        \vdots \\
        g_{v1}y_1 + \cdots + g_{vm}y_m
\end{pmatrix}.$
Now, for each $i \in \{1,2,\cdots,v\}, g_{i1}y_1 + \cdots +g_{im}y_m \le g\frac{\epsilon}{gm} + \cdots + g \frac{\epsilon}{gm} =\epsilon$\\
Take $\vec{x}=G\vec{y} \in (0,\epsilon)^v$ . Therefore, there exists $\vec{x} \in (0,\epsilon)^v$ such that $M\vec{x} \in C^u$. 
\end{proof}
The following results, lemma 4.4, lemma 4.6 and lemma 4.7 are very often used in the theorems that follow.

\begin{lemma} \label{l1} Let $u,v \in \ben$ and let $M$ be a $u \times v$ matrix with entries from $\ber^+ \cup \{0\}$ and with no row equal to the zero row. Define $\phi : (\ber ^+)^v \longrightarrow (\ber^+)^u$ by $\phi(\vec {x}) = M\vec {x}$ and let $\widetilde{\phi}: \beta ((\ber^+)^v)\longrightarrow (\beta \ber^+)^u$ be its continuous extension.
Let $q \in 0^+((\ber^+)^v)$. Then $\widetilde{\phi}(q) \in (0^+(\ber^+))^u$
\end{lemma}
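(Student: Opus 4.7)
The plan is to reduce the problem one coordinate at a time. Define $\psi_i:(\ber^+)^v\to\ber^+$ by $\psi_i(\vec{x})=\sum_{j=1}^{v} m_{ij}x_j$, so that $\phi=(\psi_1,\ldots,\psi_u)$. Let $\pi_i:(\beta\ber^+)^u\to\beta\ber^+$ denote projection onto the $i$-th coordinate, which is continuous. Since $\pi_i\circ\phi=\psi_i$ on $(\ber^+)^v$ and both sides admit unique continuous extensions into the compact Hausdorff space $\beta\ber^+$, uniqueness gives $\pi_i\circ\widetilde{\phi}=\widetilde{\psi_i}$. Writing $\widetilde{\phi}(q)=(p_1,\ldots,p_u)$, we have $p_i=\widetilde{\psi_i}(q)$, and the task reduces to showing $p_i\in 0^+(\ber^+)$ for every $i$.

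The main step is a concrete smallness estimate. Fix $i$ and $\epsilon>0$. Because row $i$ of $M$ has nonnegative entries and is not identically zero, $M_i:=\sum_{j=1}^{v} m_{ij}>0$. For any $\vec{x}\in\bigl((0,\epsilon/M_i)\cap\ber^+\bigr)^v$, each $x_j>0$, so $\psi_i(\vec{x})>0$; moreover $\psi_i(\vec{x})=\sum_{j}m_{ij}x_j<M_i\cdot(\epsilon/M_i)=\epsilon$, where strictness follows because at least one $m_{ij}>0$ and the corresponding $x_j$ is strictly less than $\epsilon/M_i$. This yields the inclusion
\[
\bigl((0,\epsilon/M_i)\cap\ber^+\bigr)^v \;\subseteq\; \psi_i^{-1}\!\bigl[(0,\epsilon)\cap\ber^+\bigr].
\]

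Now apply the hypothesis: since $q\in 0^+((\ber^+)^v)$ and $\epsilon/M_i>0$, the left-hand side lies in $q$, whence $\psi_i^{-1}[(0,\epsilon)\cap\ber^+]\in q$. By the defining property of the continuous extension to the Stone--\v{C}ech compactification, a set $B\subseteq\ber^+$ belongs to $\widetilde{\psi_i}(q)$ if and only if $\psi_i^{-1}[B]\in q$, hence $(0,\epsilon)\cap\ber^+\in p_i$. As $\epsilon$ and $i$ were arbitrary, each $p_i$ belongs to $0^+(\ber^+)$, and therefore $\widetilde{\phi}(q)\in (0^+(\ber^+))^u$.

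I do not expect any serious obstacle here; the argument is essentially routine. The only point that requires a little care is tracking strict positivity: one must use both that the coordinates of $\vec{x}$ are strictly positive and that row $i$ of $M$ has at least one strictly positive entry, in order to guarantee that $\psi_i(\vec{x})$ genuinely lands in $\ber^+$ rather than merely in $\ber^+\cup\{0\}$, so that the events written above are bona fide subsets of $\ber^+$.
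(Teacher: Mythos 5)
Your proof is correct. It rests on the same quantitative heart as the paper's argument --- if every coordinate of $\vec{x}$ is below $\epsilon$ divided by a bound on the row sums, then every entry of $M\vec{x}$ lies in $(0,\epsilon)$, using positivity of the entries of $\vec x$ and the fact that no row of $M$ is zero --- but you run it forwards rather than by contradiction. The paper instead supposes some coordinate $p_i$ misses $(0,\epsilon_i)$, builds the set $C=\ber^+\setminus(0,\delta)$, chases neighbourhoods through the continuity of $\widetilde{\phi}$ to conclude $\phi^{-1}[C^u]\in q$, and then contradicts this with the same smallness estimate (there using $\delta/(va)$ with $a$ the maximal entry, where you use the row sum $M_i$). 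Your route is cleaner on two counts: you reduce to the scalar maps $\psi_i=\pi_i\circ\phi$ and invoke the standard ultrafilter description of the extension, $B\in\widetilde{\psi_i}(q)\iff\psi_i^{-1}[B]\in q$, which turns the whole lemma into the single inclusion $\bigl((0,\epsilon/M_i)\cap\ber^+\bigr)^v\subseteq\psi_i^{-1}[(0,\epsilon)]$; and you avoid the slightly awkward step in the paper where a hypothesis about one failing coordinate is silently upgraded to sets $C_i\in p_i$ for all $i$. The only facts you use beyond the paper's toolkit are the uniqueness of continuous extensions into a Hausdorff space (to get $\pi_i\circ\widetilde{\phi}=\widetilde{\psi_i}$) and the preimage characterization of $\widetilde{\psi_i}(q)$, both standard results from Hindman--Strauss, so nothing is missing.
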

\begin{proof}
    Let $q \in 0^+((\ber^+)^v)$ and let $\Tilde{\phi}(q)= \begin{pmatrix}
    p_1\\
    \vdots\\
    p_u
\end{pmatrix}$. We have to prove that for each $i \in \{1,2, \cdots, u\}, (0,\epsilon) \in p_i \; \forall \; \epsilon > 0$. Let $i \in \{1,2, \cdots, u\}$ and suppose there exists some $\epsilon_i >0$ such that $(0,\epsilon_i) \notin p_i$. Then $\ber^+ \setminus (0,\epsilon_i) \in p_i$. Let for each $i=\{1,2,\cdots,u\}$, $C_i=\ber^+ \setminus (0,\epsilon_i)$ . Then $C_i \in p_i$ for each $i \in \{1,2,\cdots, u\}$. Let $\delta =$ min $\{\epsilon_1, \cdots, \epsilon_u\}$ and $C=\ber^+ \setminus(0,\delta)$. Then $C \in p_i$ for all $i \in \{1,2,\cdots, u\}$. Thus, $p_i \in C\ell_{\beta \ber^+}{C}$ for each $i \in \{1,2,\cdots, u\}$ and so, $C\ell_{(\beta \ber^+)^u}{C}^u$ is a neighbourhood of $\widetilde{\phi}(q)$. Since $\widetilde{\phi}$ is continuous, there exists a neighbourhood, say, $C\ell_{\beta (\ber^+)^v}W$ of $q$, where $W \in q$, such that $\widetilde{\phi}(C\ell_{\beta (\ber^+)^v}{W}) \subseteq C\ell_{(\beta \ber^+)^u}{C}^u \implies \phi(W) \subseteq C^u \implies W \subseteq \phi^{-1}[C^u] \implies \phi^{-1}[C^u] \in q$. Therefore, for any $\epsilon >0, (0,\epsilon)^v \cap \phi^{-1}[C^u] \in q$. This implies that for any $\epsilon >0$, there exists $ \vec{x} \in (0,\epsilon)^v$ such that $ \phi(\vec{x}) \in C^u$. That is, for any $\epsilon >0$, there exists $ \vec{x} \in (0,\epsilon)^v$ such that every entry of $ \phi(\vec{x}) =M\vec{x}$ is in $[\delta,\infty)$. 

Let $M=(a_{ij})$ and $a$= max $\{a_{ij}| 1 \le i \le u, 1 \le j \le v\}$ then $a>0$. Choose $\epsilon = \frac{\delta}{va}$. Then for any $\vec{x} \in (0,\frac{\delta}{va})^v, \vec{x}=(x_1, \cdots, x_v), a_{i1}x_1+\cdots+a_{iv}x_v < a \frac{\delta}{va}+ \cdots+a\frac{\delta}{va} =\delta$ for all $i \in \{1,2,\cdots, u\} $,i.e., $M\vec{x} \notin C^u$. This is a contradiction. Therefore, $\widetilde{\phi}(q) \in (0^+(\ber^+))^u$.
\end{proof}

\begin{lemma} \label{l2} Let $u,v \in \ben$ and let $M$ be a $u \times v$ matrix with entries from $\ber^+ \cup \{0\}$ and with no row equal to the zero row. Define $\phi : (\ber ^+)^v \longrightarrow (\ber^+)^u$ by $\phi(\vec {x}) = M\vec {x}$ and let $\widetilde{\phi}: \beta ((\ber^+)^v)\longrightarrow (\beta \ber^+)^u$ be its continuous extension. Let $r \in (\beta (\ber^+)^v) \setminus (0^+((\ber^+)^v))$. Then $\widetilde{\phi}(r) \notin (0^+(\ber^+))^u$.
\end{lemma}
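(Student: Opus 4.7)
The plan is to argue directly from the failure of the defining condition of $0^+((\ber^+)^v)$ at $r$, and to push that failure through a single positive entry of $M$ to force some coordinate ultrafilter of $\widetilde{\phi}(r)$ to contain a set bounded away from $0$. The two ingredients I would rely on are Definition \ref{defA} in negated form and the standard pushforward description of the continuous extension: writing $\widetilde{\phi}(r) = (p_1,\dots,p_u)$, one has $B \in p_i$ iff $(\pi_i\circ\phi)^{-1}[B] = \{\vec{x} \in (\ber^+)^v : (M\vec{x})_i \in B\} \in r$.

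First I would unpack the hypothesis. By Definition \ref{defA}, $r \notin 0^+((\ber^+)^v)$ supplies some $\epsilon>0$ with $(0,\epsilon)^v \notin r$, so that $A := (\ber^+)^v \setminus (0,\epsilon)^v$ belongs to $r$. Since every point of $A$ has at least one coordinate $\geq \epsilon$, I would decompose $A = \bigcup_{j=1}^v A_j$ with $A_j := \{\vec{x} \in (\ber^+)^v : x_j \geq \epsilon\}$ and use the ultrafilter property to fix a single index $j$ with $A_j \in r$. Then I would pick a row $i \in \{1,\dots,u\}$ with $a_{ij}>0$. Since all entries of $M$ are nonnegative and every coordinate of $\vec{x}$ is strictly positive,
\[ (M\vec{x})_i \;=\; \sum_{k=1}^{v} a_{ik}\,x_k \;\geq\; a_{ij}\,x_j \;\geq\; a_{ij}\epsilon \;=:\; \delta \;>\; 0 \]
for every $\vec{x} \in A_j$, so $A_j \subseteq (\pi_i\circ\phi)^{-1}\bigl[[\delta,\infty)\bigr]$ and the latter set lies in $r$. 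The pushforward description then gives $[\delta,\infty) \in p_i$, so $(0,\delta) \notin p_i$ and $p_i \notin 0^+(\ber^+)$; hence $\widetilde{\phi}(r) \notin (0^+(\ber^+))^u$.

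The step I expect to be the main obstacle is the choice of a row $i$ with $a_{ij}>0$: it requires column $j$ of $M$ not to be identically zero, which is not explicitly included in the stated hypothesis ``no row equal to the zero row''. The $1\times 2$ example $M = (1,\,0)$ already shows that some control on columns is really needed: any ultrafilter $r$ containing $\{(x_1,x_2) : x_1 < \tfrac{1}{n},\ x_2 \geq 1\}$ for every $n \in \ben$ lies outside $0^+((\ber^+)^2)$, yet $\widetilde{\phi}(r)$ depends only on $x_1$ and lies in $0^+(\ber^+)$. The cleanest way to handle this is to read the lemma with the additional ``no zero column'' hypothesis---the natural dual of ``no zero row'', which is automatic in the paper's intended applications where $M$ is $IPR/\ber^+$ with no redundant variables---or else to reduce to that case by first projecting away the zero-column coordinates of $(\ber^+)^v$, through which $\widetilde{\phi}$ factors.
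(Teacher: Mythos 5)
Your argument is correct, and the concern you raise at the end is not a quibble: you have found a genuine error in this lemma and in the paper's own proof of it. The paper's proof opens by asserting that $(0,\epsilon)^v \notin r$ means $[\epsilon,\infty)^v \in r$; that is false, since the complement of $(0,\epsilon)^v$ in $(\ber^+)^v$ is the union $\bigcup_{j=1}^v\{\vec x : x_j \ge \epsilon\}$, and an ultrafilter omitting $(0,\epsilon)^v$ need only contain one of these pieces --- exactly the point at which you instead invoke the ultrafilter property to fix a single index $j$. Everything after that in the paper's proof depends on having \emph{all} coordinates bounded below by $\epsilon$ on a member of $r$: it takes $\vec z \in W \cap [\epsilon,\infty)^v$ and bounds $\sum_{k} a_{ik}z_k \ge (\sum_k a_{ik})\epsilon \ge a\epsilon$ using only the ``no zero row'' hypothesis, so the argument does not survive the correction, and your $1\times 2$ example $M=(1\ \ 0)$ shows the statement itself is false as written. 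Your repaired version --- adding the hypothesis that $M$ has no zero column, so that for the index $j$ with $\{\vec x : x_j \ge \epsilon\} \in r$ one can choose a row $i$ with $a_{ij}>0$ and conclude $[a_{ij}\epsilon,\infty) \in p_i$ --- is the right statement, and your direct pushforward computation ($B \in p_i$ iff $(\pi_i\circ\phi)^{-1}[B] \in r$) is cleaner than the paper's continuity-and-neighbourhoods argument. The extra hypothesis is harmless in the paper's applications, since a zero column of $M$ corresponds to a coordinate on which $\phi$ does not depend and can be projected away, but it does need to be added to Lemma \ref{l2} and checked wherever that lemma is used (e.g.\ in Lemma \ref{leB}).
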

\begin{proof}
    Let $r \in (\beta (\ber^+)^v) \setminus (0^+((\ber^+)^v))$. Then there exists $\epsilon >0$ such that $(0,\epsilon)^v \notin r$, i.e., $[\epsilon,\infty)^v \in r$. Suppose $\widetilde{\phi}(r) \in (0^+(\ber^+))^u$. Let $\Tilde{\phi}(r)= \begin{pmatrix}
    r_1\\
    \vdots\\
    r_u
\end{pmatrix}$ $\in (0^+(\ber^+))^u$. 
Let $M=(a_{ij})$ and $a$= min $\{\sum_{j=1}^v a_{ij}: i=1,2,\cdots, u\}$. Then $a>0$.
 Since $\widetilde{\phi}$ is continuous, for the neighbourhood $C\ell_{0^+(\ber^+)}(B_1 \cap (0,\epsilon a)) \times \cdots \times C\ell_{0^+(\ber^+)}(B_u \cap (0,\epsilon a))$ of  $\widetilde{\phi}(r)$, where $B_i \in r_i$ for $i=1,2,\cdots,u$ and $W \in r$,  there exists a neighbourhood $C\ell_{\beta (\ber^+)^v}W$ of $r$, such that   $\widetilde{\phi}(C\ell_{\beta (\ber^+)^v)}{W}) \subseteq C\ell_{0^+(\ber^+)}(B_1 \cap (0,\epsilon a)) \times \cdots \times C\ell_{0^+(\ber^+)}(B_u \cap (0,\epsilon a))$, i.e., $\phi(W) \subseteq B_1 \cap (0,\epsilon a) \times \cdots \times B_u \cap (0,\epsilon a)$. Now $W \cap [\epsilon, \infty)^v \in r$, therefore $\phi(W \cap [\epsilon, \infty)^v) \subseteq B_1 \cap (0,\epsilon a) \times \cdots \times B_u \cap (0,\epsilon a)$. Take $\vec{z} =\begin{pmatrix}
     z_1\\
     \vdots\\
     z_v
 \end{pmatrix} \in (W \cap [\epsilon, \infty)^v)$. Then $\phi(\vec{z})=M\vec{z} =\begin{pmatrix}
     a_{11}z_1 \cdots a_{1v}z_v\\
     \vdots\\
     a_{u1}z_1 \cdots a_{uv}z_v
 \end{pmatrix}$ . Now, for each $i \in \{1,2,\cdots,u\}, a_{i1}z_1+ \cdots +a_{iv}z_v \ge a_{i1}\epsilon+\cdots+a_{iv}\epsilon =(a_{i1} +\cdots+a_{iv})\epsilon>a\epsilon $. Thus, $\phi(\vec{z}) \notin (0,\epsilon a)^v$. This is a contradiction.
\end{proof}

\begin{lemma} \label{leA} Let $u,v \in \ben$ and let $M$ be a $u \times v$ matrix with entries from $\ber^+ \cup \{0\}$ and with no row equal to the zero row. Define $\phi : (\ber ^+)^v \longrightarrow (\ber^+)^u$ by $\phi(\vec {x}) = M\vec {x}$ and let $\widetilde{\phi}: \beta ((\ber^+)^v)\longrightarrow (\beta \ber^+)^u$ be its continuous extension.
Let $p$ be an idempotent in $0^+(\ber^+)$. If for every $\epsilon >0$ and every $C \in p$, there exists $\vec {x} \in (0,\epsilon)^v$ such that $ M \vec {x} \in C^u$ then there exists an idempotent $q \in 0^+((\ber^+)^v)$ such that $\widetilde{\phi}(q)=\vec{p}= \begin{pmatrix}
    p\\
    \vdots\\
    p
\end{pmatrix} \in (0^+(\ber^+))^u$. If $p$ is minimal idempotent then $q$ is also minimal idempotent and $\widetilde{\phi}[K(0^+((\ber^+)^v))]=\widetilde{\phi}[0^+((\ber^+)^v)] \bigcap (K(0^+(\ber^+)))^u$.
\end{lemma}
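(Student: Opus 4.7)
The plan is to introduce the set $T = \widetilde{\phi}^{-1}[\{\vec{p}\}] \cap 0^+((\ber^+)^v)$ and verify that it is a nonempty compact subsemigroup of $0^+((\ber^+)^v)$, so that Ellis's theorem delivers an idempotent $q \in T$. The minimality claim will be obtained by checking $T \cap K(0^+((\ber^+)^v)) \neq \emptyset$, from which it follows that minimal idempotents of $T$ are minimal in the ambient semigroup. Throughout, I use that $\widetilde{\phi}$ is a continuous homomorphism into the product right topological semigroup $(\beta\ber^+)^u$, since each coordinate $\widetilde{\phi}_i$ is the Stone--\v{C}ech extension of a homomorphism into a semigroup whose topological centre contains $\ber^+$ (\cite[Corollary 4.22]{HSc}).

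For nonemptiness of $T$, consider the family $\mathcal{F} = \{\phi^{-1}[C^u] : C \in p\} \cup \{(0,\epsilon)^v : \epsilon > 0\}$. Because $p$ is closed under finite intersection and the hypothesis provides $\vec{x} \in (0,\epsilon)^v$ with $M\vec{x} \in C^u$ for every $C \in p$ and every $\epsilon > 0$, this family has the finite intersection property. Extending $\mathcal{F}$ to an ultrafilter $q$ on $(\ber^+)^v$, one has $q \in 0^+((\ber^+)^v)$, and for every $C \in p$ and every $i$ the inclusion $\phi_i^{-1}[C] \supseteq \phi^{-1}[C^u] \in q$ forces $p \subseteq \widetilde{\phi}_i(q)$; as both are ultrafilters, $\widetilde{\phi}_i(q) = p$, so $\widetilde{\phi}(q) = \vec{p}$ and $q \in T$. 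Compactness of $T$ is immediate, and the subsemigroup property is inherited from $0^+((\ber^+)^v)$ (Lemma \ref{l}) combined with $\widetilde{\phi}(q_1 + q_2) = \vec{p} + \vec{p} = \vec{p}$ for $q_1,q_2 \in T$. Ellis's theorem then produces an idempotent $q \in T$.

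For the minimality statement, let $S' = \widetilde{\phi}[0^+((\ber^+)^v)]$; by Lemma \ref{l1} this is a compact subsemigroup of $(0^+(\ber^+))^u$. Using the product formula $K((0^+(\ber^+))^u) = (K(0^+(\ber^+)))^u$ and minimality of $p$, we have $\vec{p} \in S' \cap (K(0^+(\ber^+)))^u$, so the intersection theorem for smallest ideals (\cite[Theorem 1.65]{HSc}) gives $K(S') = S' \cap (K(0^+(\ber^+)))^u$. Because $\widetilde{\phi}$ is a continuous surjective homomorphism onto $S'$, the standard preservation theorem yields $\widetilde{\phi}[K(0^+((\ber^+)^v))] = K(S')$. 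Chaining these two identities produces the set equality asserted in the lemma, and supplies some $k \in K(0^+((\ber^+)^v))$ with $\widetilde{\phi}(k) = \vec{p}$, so $T \cap K(0^+((\ber^+)^v)) \neq \emptyset$. Hence $K(T) = T \cap K(0^+((\ber^+)^v))$, and Ellis's theorem applied to $K(T)$ furnishes an idempotent $q \in T$ that is minimal in $0^+((\ber^+)^v)$.

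The principal obstacle is handling the black-box results about smallest ideals---the product formula, the intersection-with-a-subsemigroup theorem, and the image formula under a continuous surjective homomorphism---and threading them together while keeping straight which ambient semigroup each minimality statement refers to. Verifying that $\widetilde{\phi}$ really is a homomorphism into the product $(\beta\ber^+)^u$, rather than into $\beta((\ber^+)^u)$, is a subsidiary technical point best handled coordinate by coordinate.
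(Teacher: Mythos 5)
Your proof is correct and follows the same overall skeleton as the paper's: both work with the set $T=\widetilde{\phi}^{-1}[\{\vec{p}\}]\cap 0^+((\ber^+)^v)$, obtain an idempotent there by Ellis's theorem, and derive the ideal identity from \cite[Theorem 2.23]{HSc} (product of smallest ideals), \cite[Theorem 1.65]{HSc} (intersection with a subsemigroup meeting the smallest ideal), and \cite[Exercise 1.7.3]{HSc} (image of the smallest ideal under a continuous surjective homomorphism). Two sub-steps are handled differently, and in both cases your route is arguably cleaner. For nonemptiness of $T$, the paper argues by contradiction: it assumes $\vec{p}\notin\widetilde{\phi}[0^+((\ber^+)^v)]$, extracts a basic neighbourhood $C\ell\,B_1\times\cdots\times C\ell\,B_u$ missing the image, and uses the finite intersection property of $\{C\ell\,\phi^{-1}[B^u]\cap C\ell\,(0,\epsilon)^v:\epsilon>0\}$ to produce a point of $0^+((\ber^+)^v)$ whose image lands in that neighbourhood. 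You instead extend the filter base $\{\phi^{-1}[C^u]:C\in p\}\cup\{(0,\epsilon)^v:\epsilon>0\}$ directly to an ultrafilter $q$ and read off $\widetilde{\phi}_i(q)=p$ coordinatewise from $\phi_i^{-1}[C]\supseteq\phi^{-1}[C^u]\in q$; this is more direct and yields a point of $T$ explicitly rather than merely showing $T\neq\emptyset$. For minimality, the paper first produces a minimal idempotent $q\le w$ below some idempotent $w\in T$ and uses $\widetilde{\phi}(q)\le\vec{p}$ with minimality of $\vec{p}$ to conclude $q\in T$, and only then deduces that the image of $0^+((\ber^+)^v)$ meets $(K(0^+(\ber^+)))^u$ so that the ideal identity follows; you reverse the order, observing that $\vec{p}$ itself already witnesses $\widetilde{\phi}[0^+((\ber^+)^v)]\cap(K(0^+(\ber^+)))^u\neq\emptyset$, establishing the set equality first, and then extracting a minimal idempotent from $K(T)=T\cap K(0^+((\ber^+)^v))$. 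Both orderings are sound; yours avoids the order-theoretic manipulation $\widetilde{\phi}(q)\le\widetilde{\phi}(w)$ entirely. Your closing remark that $\widetilde{\phi}$ must be verified to be a homomorphism into the product $(\beta\ber^+)^u$ coordinate by coordinate is a point the paper glosses over, and it is worth making explicit.
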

\begin{proof}
    Let $p$ be an idempotent in $0^+(\ber^+)$. Let $\mathcal{A}=\{q \in 0^+((\ber^+)^v): \widetilde{\phi}(q)=\vec{p}\}$. Then we will prove that $\mathcal{A}$ is non-empty, i.e, $\widetilde{\phi}^{-1}(\{\vec{p}\}) \cap 0^+((\ber^+)^v) \neq \emptyset$ 
    i.e., to prove that $\vec{p} \in \widetilde{\phi}[0^+(\ber^+)^v]$. Suppose $\vec{p} \notin \widetilde{\phi}[0^+((\ber^+)^v)]$. Then $\vec{p}$ is not a limit point of $\widetilde{\phi}[0^+((\ber^+)^v)]$. Therefore, there exists a neighbourhood $W$ of $\vec{p}$ such that  $W \bigcap \widetilde{\phi}(0^+((\ber^+)^v)) = \emptyset$, where $W = C\ell_{0^+(\ber^+)}{B}_1 \times \cdots \times C\ell_{0^+(\ber^+))}{B}_u$ for $B_i \in p, i=1,2, \cdots u$. Take $B=B_1 \cap \cdots \cap B_u$ then $B \in p$. Therefore, $C\ell_{(0^+(\ber^+))^u}{B^u} \cap  \widetilde{\phi}(0^+((\ber^+)^v)) = \emptyset$. Given for $B \in p, \epsilon >0$, there exists $\vec {x} \in (0,\epsilon)^v$ such that $ \phi( \vec {x}) \in B^u$, i.e., $\vec{x} \in \phi^{-1}[B^u]$. Therefore, $\phi^{-1}[B^u] \cap (0,\epsilon)^v \neq \emptyset $ for every $\epsilon > 0$. Let $\mathcal{C} =\{C\ell_{0^+((\ber^+)^v)}{\phi^{-1}[B^u]} \cap C\ell_{0^+((\ber+)^v)} {(0,\epsilon)^v}: \epsilon > 0\}$. Then, $\mathcal{C}$ is a collection of closed sets in $0^+((\ber^+)^v)$ which has the finite intersection property. Therefore, $\bigcap_{\epsilon >0}(C\ell_{0^+((\ber^+)^v)}{\phi^{-1}[B^u]} \cap (C\ell_{0^+((\ber^+)^v)} {(0,\epsilon)^v}) \neq \emptyset$, i.e., $(C\ell_{0^+((\ber^+)^v)}{\phi^{-1}[B^u]} \cap (\cap_{\epsilon >0}(C\ell_{0^+((\ber^+)^v)} {(0,\epsilon)^v}) \neq \emptyset$, i.e.,$(C\ell_{0^+((\ber^+)^v)}{\phi^{-1}[B^u]} \cap 0^+((\ber^+)^v) \neq \emptyset$. Let $r \in (C\ell_{0^+((\ber^+)^v)}{\phi^{-1}[B^u]} \cap 0^+((\ber^+)^v)$. This implies that $r \in 0^+((\ber^+)^v)$ and $\widetilde{\phi}(r) \in C\ell_{(0^+(\ber^+))^u}{B^u}$. Therefore,  $C\ell_{(0^+(\ber^+))^u}{B^u} \cap  \widetilde{\phi}(0^+((\ber^+)^v)) \neq \emptyset$, which is a contradiction. Therefore, $\mathcal{A}$ is non-empty. Now, $\mathcal{A}= \widetilde{\phi}^{-1}(\{\vec{p}\})$ is a compact right topological semigroup in $0^+((\ber^+))^v$. Thus, there exists an idempotent $q \in 0^+((\ber^+)^v)$ such that $\widetilde{\phi}(q)=\vec{p}$. 
    
    Let $p$ be minimal idempotent in $0^+(\ber^+)$. Then there exists an idempotent $w$ in $0^+((\ber^+)^v)$ such that $\widetilde{\phi}(w)=\vec{p}$. For this idempotent $w$, by \cite[Theorem 1.60]{HSc} there exists a minimal idempotent, say $q$ in $0^+((\ber^+)^v)$ such that $q \le w$. i.e., $q=q+w \implies \widetilde{\phi}(q)=\widetilde{\phi}(q+w)=\widetilde{\phi}(q)+ \widetilde{\phi}(w) \implies \widetilde{\phi}(q) \le \widetilde{\phi}(w)$. Since $ q$ and $w$ are idempotents and $\widetilde{\phi}$ is homomorphism, therefore, $\widetilde{\phi}(q)$ and $\widetilde{\phi}(w)$ are also idempotents in $(0^+( \ber^+))^u$ by using Lemma \ref{l1}. Since $p$ is minimal, $\vec{p}$ is also minimal and $\widetilde{\phi}(q) \le \vec{p}$ which implies that $\widetilde{\phi}(q) =\vec{p}$. Therefore, there exists minimal idempotent $q \in 0^+((\ber^+)^v)$ such that  $\widetilde{\phi}(q) =\vec{p}$. Therefore, $\widetilde{\phi}[0^+((\ber^+)^v] \bigcap K((0^+(\ber^+))^u) \neq \emptyset$ and by \cite[Theorem  2.23]{HSc},  $\widetilde{\phi}[0^+((\ber^+)^v] \bigcap (K(0^+(\ber^+)))^u \neq \emptyset$.

By\cite[Theroem 1.65]{HSc}, $K[\widetilde{\phi}(0^+((\ber^+)^v)] = \widetilde{\phi}(0^+((\ber^+)^v) \bigcap (K(0^+(\ber^+)))^u$. By \cite[Exercise 1.7.3]{HSc},  $\widetilde{\phi}[K(0^+((\ber^+)^v))]= K[\widetilde{\phi}(0^+((\ber^+)^v)]$. Therefore, \sloppy $\widetilde{\phi}[K((0^+((\ber^+)^v))] = \widetilde{\phi}(0^+((\ber^+)^v) \bigcap (K(0^+(\ber^+)))^u$. 
\end{proof} 

\begin{lemma} \label{leB}
    Let $u,v \in \ben$ and let $M$ be a $u \times v$ matrix with entries from $\ber^+ \cup \{0\}$ and with no row of $M$ as the zero row. Define $\phi : (\ber ^+)^v \longrightarrow (\ber^+)^u$ by $\phi(\vec {x}) = M\vec {x}$ and let $\widetilde{\phi}: \beta ((\ber^+)^v)\longrightarrow (\beta \ber^+)^u$ be its continuous extension.
    Let $p \in 0^+(\ber^+), C \in p$, then $\widetilde{\phi}^{-1}[C\ell _{(0^+(\ber^+))^u} C^u]$=$C\ell _{0^+((\ber^+)^v)} \phi^{-1}[C^u]$. 
\end{lemma}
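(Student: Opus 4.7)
The plan is to reduce the claimed equality to a cleaner statement about closures inside the full Stone--\v{C}ech spaces $\beta((\ber^+)^v)$ and $(\beta\ber^+)^u$, and then to intersect with the $0^+$ subspaces using Lemmas \ref{l1} and \ref{l2}. Concretely, I first aim to establish the intermediate identity
\[
\widetilde{\phi}^{-1}\bigl[C\ell_{(\beta\ber^+)^u}\,C^u\bigr]\ =\ C\ell_{\beta((\ber^+)^v)}\,\phi^{-1}[C^u],
\]
valid for any $C\subseteq\ber^+$, and then to restrict both sides to $0^+((\ber^+)^v)$.

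For this key identity I use the ultrafilter description of the continuous extension. Letting $\pi_i:(\ber^+)^u\to\ber^+$ denote the $i$-th coordinate projection, the $i$-th coordinate of $\widetilde{\phi}(q)$ is $\widetilde{(\pi_i\circ\phi)}(q)$, so the defining property of the continuous extension gives $C\in(\widetilde{\phi}(q))_i$ iff $(\pi_i\circ\phi)^{-1}[C]\in q$. Since $\phi^{-1}[C^u]=\bigcap_{i=1}^u(\pi_i\circ\phi)^{-1}[C]$ and $q$ is closed under finite intersections,
\[
\widetilde{\phi}(q)\in C\ell_{(\beta\ber^+)^u}C^u\ \Longleftrightarrow\ \phi^{-1}[C^u]\in q\ \Longleftrightarrow\ q\in C\ell_{\beta((\ber^+)^v)}\phi^{-1}[C^u],
\]
which gives the displayed identity.

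To finish I intersect both sides with $0^+((\ber^+)^v)$. Lemma \ref{l1} says $\widetilde{\phi}[0^+((\ber^+)^v)]\subseteq(0^+(\ber^+))^u$, and the contrapositive of Lemma \ref{l2} says that nothing outside $0^+((\ber^+)^v)$ is sent into $(0^+(\ber^+))^u$; together they yield $\widetilde{\phi}^{-1}[(0^+(\ber^+))^u]=0^+((\ber^+)^v)$. Since $C\ell_{(0^+(\ber^+))^u}C^u=C\ell_{(\beta\ber^+)^u}C^u\cap(0^+(\ber^+))^u$ and analogously in the domain, the desired equality follows by intersecting both sides of the preceding identity with the appropriate $0^+$ subspace. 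The only real piece of bookkeeping is that the codomain is the product space $(\beta\ber^+)^u$ rather than $\beta((\ber^+)^u)$, so product closures must be handled coordinatewise; this is precisely what the projections $\pi_i$ accomplish, and no new combinatorial input beyond the cited lemmas is needed.
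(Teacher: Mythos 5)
Your proof is correct, and it rests on the same external inputs as the paper's argument (continuity of $\widetilde{\phi}$ together with Lemmas \ref{l1} and \ref{l2}), but the decomposition is genuinely different. The paper works entirely inside the $0^+$ spaces and proves the two inclusions separately: for $\supseteq$ it notes that $\widetilde{\phi}^{-1}[C\ell_{(0^+(\ber^+))^u}C^u]$ is a closed set ``containing'' $\phi^{-1}[C^u]$ and appeals to minimality of the closure; for $\subseteq$ it uses Lemma \ref{l2} to place $q$ in $0^+((\ber^+)^v)$ and then a basic-neighbourhood argument to conclude $\phi^{-1}[C^u]\in q$. You instead first establish the global identity $\widetilde{\phi}^{-1}[C\ell_{(\beta\ber^+)^u}C^u]=C\ell_{\beta((\ber^+)^v)}\phi^{-1}[C^u]$ via the standard ultrafilter description of the extension ($A\in\widetilde{f}(q)$ iff $f^{-1}[A]\in q$, applied coordinatewise through $\pi_i\circ\widetilde{\phi}=\widetilde{\pi_i\circ\phi}$), and then intersect with $\widetilde{\phi}^{-1}[(0^+(\ber^+))^u]=0^+((\ber^+)^v)$, which is exactly the conjunction of Lemmas \ref{l1} and \ref{l2}. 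Your route buys two things: it makes explicit that the identity holds for arbitrary $C\subseteq\ber^+$ (neither proof actually uses the hypothesis $C\in p$ for some $p\in 0^+(\ber^+)$), and it cleans up a small imprecision in the paper's $\supseteq$ direction, where $\phi^{-1}[C^u]$ consists of principal ultrafilters that do not literally lie in $\widetilde{\phi}^{-1}[C\ell_{(0^+(\ber^+))^u}C^u]$; that step only parses once all closures are read as closures in the full Stone--\v{C}ech spaces intersected with the $0^+$ subspaces, which is precisely the bookkeeping your final paragraph carries out. The price is that you invoke the ultrafilter characterization of $\widetilde{f}$ rather than the purely topological neighbourhood argument the paper prefers, but the two are interchangeable here since the relevant sets $C\ell_{(\beta\ber^+)^u}C^u$ are clopen.
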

\begin{proof}
    Since $\widetilde{\phi}$ is continuous, \sloppy $\widetilde{\phi}^{-1}[C\ell _{(0^+(\ber^+))^u} C^u]$ is a closed set  containing $\phi^{-1}[C^u]$. But $C\ell _{0^+((\ber^+)^v)} \phi^{-1}[C^u]$ is smallest closed set containing $\phi^{-1}[C^u]$, therefore $\widetilde{\phi}^{-1}[C\ell _{(0^+(\ber^+))^u} C^u] \supseteq C\ell _{0^+((\ber^+)^v)} \phi^{-1}[C^u]$. \\
    Let  $q \in \widetilde{\phi}^{-1}[C\ell _{(0^+(\ber^+))^u} C^u]$. By Lemma \ref{l2}, $q \in 0^+((\ber^+)^v)$. Now,  $\widetilde{\phi}(q) \in C\ell _{(0^+(\ber^+))^u} C^u$. Since $\widetilde{\phi}$ is continuous, therefore there exists a neighbourhood $C\ell_{0^+((\ber^+)^v)} W$ of $q$ such that $\widetilde{\phi}(C\ell_{0^+((\ber^+)^v)} W) \subseteq C\ell _{(0^+(\ber^+))^u} C^u$ or, $\phi(W) \subseteq C^u$ or, $W \subseteq \phi^{-1}[C^u]$. Since $W \in q$, therefore $\phi^{-1}[C^u] \in q$, i.e., $q \in C\ell _{0^+((\ber^+)^v)} \phi^{-1}[C^u]$.
\end{proof}

We are now in a position to prove Theorem \ref{T1} for the various notions of largeness.

\begin{theorem}
    Let $u,v \in \ben$ and $M$ be a $u \times v $ matrix with entries from $\ber^+ \cup \{0\}$ which is $IPR/\ber^+$. If $C$ is an $SC^*$ set near zero in $\ber^+$, then $\{\vec{x} \in (\ber^+)^v: M\vec{x} \in C^u\}$ is an $SC^*$ set near zero in $(\ber^+)^v$.
\end{theorem}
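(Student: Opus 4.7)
The plan is as follows. Unpacking the definition, $C$ being $SC^*$ near zero in $\ber^+$ yields a minimal left ideal $L_0$ of $0^+(\ber^+)$ with $E(L_0) \subseteq \overline{C}$; writing $\phi(\vec{x}) = M\vec{x}$, the task is to produce a minimal left ideal $L'$ of $0^+((\ber^+)^v)$ with $E(L') \subseteq \overline{\phi^{-1}[C^u]}$. To build $L'$, I first fix a minimal idempotent $p \in L_0$, note that $C \in p$ so that $C$ is central near zero in $\ber^+$, and invoke Theorem \ref{th4}: since $M$ is $IPR/\ber^+$, for each $B \in p$ and each $\epsilon > 0$ there is $\vec{x} \in (0,\epsilon)^v$ with $M\vec{x} \in B^u$. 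Lemma \ref{leA} then delivers a minimal idempotent $q \in 0^+((\ber^+)^v)$ with $\widetilde{\phi}(q) = \vec{p} := (p, p, \ldots, p)$. I set $L' := 0^+((\ber^+)^v) + q$, the minimal left ideal containing $q$.

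Next I show $E(L') \subseteq \overline{\phi^{-1}[C^u]}$. Let $r \in E(L')$. Since $\widetilde{\phi}$ is a continuous homomorphism and Lemma \ref{l1} places $\widetilde{\phi}(0^+((\ber^+)^v))$ inside $(0^+(\ber^+))^u$, one has
\[
\widetilde{\phi}(L') \;=\; \widetilde{\phi}(0^+((\ber^+)^v)) + \vec{p} \;\subseteq\; (0^+(\ber^+))^u + \vec{p},
\]
so each coordinate of $\widetilde{\phi}(r)$ lies in $0^+(\ber^+) + p$, which equals $L_0$ because $L_0$ is a minimal left ideal containing the idempotent $p$. As $r$ is idempotent, so is $\widetilde{\phi}(r)$, and idempotents in the product $(0^+(\ber^+))^u$ are componentwise idempotents; hence every coordinate of $\widetilde{\phi}(r)$ lies in $E(L_0) \subseteq \overline{C}$. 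Therefore $\widetilde{\phi}(r) \in \overline{C}^u = C\ell_{(0^+(\ber^+))^u} C^u$, and Lemma \ref{leB} gives $r \in \overline{\phi^{-1}[C^u]}$, as required.

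The heart of the argument, and the step where I expect the main difficulty if Lemma \ref{leA} were not available, is the alignment of $q$ so that $\widetilde{\phi}(q)$ is the diagonal tuple $\vec{p}$, with the same $p$ in every coordinate. Without this, $\widetilde{\phi}(L')$ would only be forced into a product of (possibly distinct) minimal left ideals of $0^+(\ber^+)$, and no mechanism would push the coordinates of each idempotent $\widetilde{\phi}(r)$ into the specific set $E(L_0) \subseteq \overline{C}$. The $IPR/\ber^+$ hypothesis enters precisely here, through Theorem \ref{th4}, to make this diagonal preimage available.
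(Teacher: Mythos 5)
Your proposal is correct and follows essentially the same route as the paper: fix a minimal idempotent $p$ in the witnessing minimal left ideal, use Theorem \ref{th4} and Lemma \ref{leA} to obtain a minimal idempotent $q$ with $\widetilde{\phi}(q)=\vec{p}$, take $L'=0^+((\ber^+)^v)+q$, and push each coordinate of $\widetilde{\phi}(r)$ for $r\in E(L')$ into $E(L_0)\subseteq\overline{C}$ before applying Lemma \ref{leB}. The only cosmetic difference is that the paper verifies the coordinate membership via the identity $r=r+q$ rather than via $\widetilde{\phi}(L')\subseteq(0^+(\ber^+))^u+\vec{p}$, which amounts to the same computation.
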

\begin{proof}
    Let $\phi : (\ber ^+)^v \longrightarrow (\ber^+)^u$ be defined by $\phi(\vec{x}) = M\vec{x}$ and let $\widetilde{\phi}: \beta ((\ber^+)^v)\longrightarrow (\beta \ber^+)^u$ be its continuous extension. Since $C$ is $SC^*$ set near zero, there exists a minimal left ideal $L$ of $0^+(\ber^+)$ with $E(L) \subseteq C\ell_{0^+(\ber^+)}{C}$. Let $p$ be an idempotent in $L$ and let $\vec{p}= \begin{pmatrix}
    p\\
    \vdots\\
    p
\end{pmatrix}$ 
$\in (0^+(\ber^+))^v$. Since every member of $p$ is central near zero, so by Theorem \ref{th4}, $\forall \; C \in p, \epsilon>0, \exists \; \vec{x} \in (0,\epsilon)^v$ such that $M\vec{x} \in C^u$. Also by Lemma \ref{leA}, there exists an idempotent $q \in K(0^+((\ber^+)^v))$ such that $\widetilde{\phi}(q) = \vec {p}$. Let $L'=0^+((\ber ^+)^v) +q$. Then $L'$ is a minimal left ideal of $0^+((\ber ^+)^v)$. We claim that $E(L') \subseteq Cl_{0^+((\ber ^+)^v)} \phi^{-1}[C^u]$. Let $r \in E(L')$. Then $r=r+q$. Let $\widetilde{\phi}(r)=\begin{pmatrix}
    s_1\\ 
    \vdots\\
    s_u
\end{pmatrix}$ 
$\in (0^+(\ber^+))^u$. Since $\widetilde{\phi}$ is homomorphism and $r$ is an idempotent, therefore each $s_i$ is an idempotent in $0^+(\ber^+)$. But $\widetilde{\phi}(r) = \widetilde{\phi}(r+q)= \widetilde{\phi}(r) + \widetilde{\phi}(q) = \begin{pmatrix}
    s_1\\
    \vdots\\
    s_u
\end{pmatrix} + \begin{pmatrix}
    p\\
    \vdots\\
    p
\end{pmatrix} = \begin{pmatrix}
    s_1 + p\\
    \vdots\\
    s_u + p
\end{pmatrix}$. Now for each $i \in \{1,2,\cdots, u\}$, $s_i + p \in L$ and since for each $i =1,2,\cdots,u, s_i+p$ is an idempotent in $0^+(\ber^+)$ so $s_i+p \in E(L)$. Therefore, $\widetilde{\phi}(r) \in (E(L))^u$, i.e., $r \in \widetilde{\phi}^{-1}[((E(L))^u] \subseteq \widetilde{\phi}^{-1}[C\ell_{(0^+(\ber^+))^u}{C}^u]=Cl_{0^+((\ber ^+)^v)} \phi^{-1}[C^u]$. Therefore, $\phi^{-1}[C^u]$ is an $SC^*$ set near zero in $(\ber ^+)^v$. 
\end{proof}

\begin{theorem}
    Let $u,v \in \ben$ and $M$ be a $u \times v $ matrix with entries from $\ber^+ \cup \{0\}$ which is $IPR/\ber^+$. If $C$ is a thick set near zero in $\ber^+$, then $\{\vec{x} \in (\ber^+)^v: M\vec{x} \in C^u\}$ is also thick near zero in $(\ber^+)^v$.
\end{theorem}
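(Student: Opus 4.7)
The plan is to follow the same skeleton as the $SC^*$ proof just given, adapted to the thickness definition (existence of a left ideal inside the closure, rather than a minimal left ideal whose idempotents lie in the closure). As before I set $\phi(\vec x)=M\vec x$ with continuous extension $\widetilde{\phi}$.

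First, using that $C$ is thick near zero, I would fix a left ideal $L$ of $0^+(\ber^+)$ with $L \subseteq C\ell_{0^+(\ber^+)}C$. Then I would pick a minimal idempotent $p$ inside $L$; this is possible because $L$, being a left ideal of the compact right topological semigroup $0^+(\ber^+)$, contains some minimal left ideal of $0^+(\ber^+)$, and every idempotent in a minimal left ideal is a minimal idempotent of the ambient semigroup. Since $p$ is a minimal idempotent, every member of $p$ is central near zero in $\ber^+$, so Theorem \ref{th4} applied to $M$ (which is $IPR/\ber^+$) yields that for each $C' \in p$ and each $\epsilon>0$ there exists $\vec x \in (0,\epsilon)^v$ with $M\vec x \in C'^u$. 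This is precisely the hypothesis of Lemma \ref{leA}, which then supplies an idempotent $q \in 0^+((\ber^+)^v)$ (in fact minimal) with $\widetilde{\phi}(q) = \vec p$.

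Next I would set $L' = 0^+((\ber^+)^v) + q$, which is a left ideal of $0^+((\ber^+)^v)$, and verify $L' \subseteq C\ell_{0^+((\ber^+)^v)} \phi^{-1}[C^u]$. For arbitrary $r \in L'$, write $r = r' + q$ with $r' \in 0^+((\ber^+)^v)$; by Lemma \ref{l1}, $\widetilde{\phi}(r') = (s_1, \ldots, s_u)^T$ with each $s_i \in 0^+(\ber^+)$. Because $\widetilde{\phi}$ is a homomorphism, $\widetilde{\phi}(r) = (s_1 + p, \ldots, s_u + p)^T$. Since $L$ is a left ideal of $0^+(\ber^+)$ and $p \in L$, each $s_i + p$ lies in $L$ and hence in $C\ell_{0^+(\ber^+)}C$. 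Therefore $\widetilde{\phi}(r) \in (C\ell_{0^+(\ber^+)}C)^u = C\ell_{(0^+(\ber^+))^u} C^u$, and Lemma \ref{leB} converts this into $r \in C\ell_{0^+((\ber^+)^v)}\phi^{-1}[C^u]$. This proves $L' \subseteq C\ell_{0^+((\ber^+)^v)}\phi^{-1}[C^u]$, so $\phi^{-1}[C^u]$ is thick near zero in $(\ber^+)^v$.

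The argument is essentially mechanical given the machinery already assembled. The one subtle step is choosing $p$ to be \emph{minimal} inside $L$ rather than an arbitrary idempotent: only minimality ensures that every $C' \in p$ is central near zero, which is exactly what Theorem \ref{th4} needs in order to trigger the lifting guaranteed by Lemma \ref{leA}. Once $q$ has been obtained, the thickness conclusion is strictly easier to verify than the $SC^*$ conclusion in the preceding theorem, because we only need $s_i + p \in L$ (automatic since $L$ is a left ideal containing $p$) rather than $s_i + p$ being an idempotent in a specific minimal left ideal.
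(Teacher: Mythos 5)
Your proposal is correct and follows essentially the same route as the paper: pick a (minimal) idempotent $p$ in the left ideal $L$ witnessing thickness, lift it via Theorem \ref{th4} and Lemma \ref{leA} to an idempotent $q$ with $\widetilde{\phi}(q)=\vec{p}$, and check that the left ideal $0^+((\ber^+)^v)+q$ lands inside $C\ell_{0^+((\ber^+)^v)}\phi^{-1}[C^u]$ using Lemmas \ref{l1} and \ref{leB}. The only difference is cosmetic: you spell out the reduction to a minimal idempotent and the appeal to Theorem \ref{th4}, which the paper's version of this particular proof leaves implicit.
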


\begin{proof}
     Let $\phi : (\ber ^+)^v \longrightarrow (\ber^+) ^u$ be defined by $\phi(\vec {x}) = M\vec {x}$ and let $\widetilde{\phi}: \beta ((\ber^+)^v)\longrightarrow (\beta \ber^+)^u$ be its continuous extension. Let $C$ be a thick set near zero in $\ber ^+$. Therefore, there exists a minimal left ideal $L$ of $0^+(\ber^+)$ with $L \subseteq C\ell_{0^+(\ber^+)}{C}$. Let $p$ be an idempotent in $L$.  Let $\vec{p}= \begin{pmatrix}
   p\\
   \vdots\\
    p
\end{pmatrix} \in (0^+(\ber^+))^u$. Since $M$ is $IPR/\ber^+$, by Lemma \ref{leA} there exists an idempotent $q \in 0^+((\ber^+)^v) $ such that $\widetilde{\phi}(q)=\vec{p}$. We claim that $0^+((\ber^+)^v) + q \subseteq C\ell_{0^+((\ber^+)^v)} \phi^{-1}[C^u]$. Let $r \in 0^+((\ber^+)^v)$ and let $\widetilde{\phi}(r) =\begin{pmatrix}
    s_1\\
    \vdots\\
    s_u
\end{pmatrix}$. Then by Lemma \ref{l1}, $\widetilde{\phi}(r) \in (0^+(\ber^+))^u$. Now,  $\widetilde{\phi}(r+q) = \widetilde{\phi}(r)+ \widetilde{\phi}(q) =\widetilde{\phi}(r) + \vec{p}$ = $\begin{pmatrix}
    s_1\\
    \vdots\\
    s_u
\end{pmatrix} + \begin{pmatrix}
    p\\
    \vdots\\
    p
\end{pmatrix} = \begin{pmatrix}
    s_1+p\\
    \vdots\\
    s_u+p
\end{pmatrix} \in L^u$. Therefore, $\widetilde{\phi}(r+q) \in L^u \subseteq C\ell_{(0^+(\ber^+))^u}{C}^u$, i.e.,$r+q \in \widetilde{\phi}^{-1}[C\ell_{(0^+(\ber^+))^u}{C}^u]= C\ell_{0^+((\ber^+)^v)} \; \phi^{-1}[C^u]$.

\end{proof}

\begin{theorem}
    Let $u,v \in \ben$ and $M$ be a $u \times v $ matrix with entries from $\ber^+ \cup \{0\}$ which is $IPR/\ber^+$. If $C$ is a $PS^*$ set near zero in $\ber^+$, then $\{\vec{x} \in (\ber^+)^v: M\vec{x} \in C^u\}$ is a $PS^*$ set near zero in $(\ber^+)^v$.
\end{theorem}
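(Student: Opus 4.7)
The plan is to follow the template established by the previous two theorems, pushing the $PS^*$ structure across $\phi$ by combining Lemma \ref{l1}, Lemma \ref{leA}, and Lemma \ref{leB}. Define $\phi:(\ber^+)^v\longrightarrow(\ber^+)^u$ by $\phi(\vec x)=M\vec x$ and let $\widetilde{\phi}:\beta((\ber^+)^v)\longrightarrow(\beta\ber^+)^u$ be its continuous extension. Showing that $\phi^{-1}[C^u]$ is $PS^*$ near zero in $(\ber^+)^v$ amounts to establishing
\[
K\bigl(0^+((\ber^+)^v)\bigr)\subseteq C\ell_{0^+((\ber^+)^v)}\,\phi^{-1}[C^u].
\]

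First I would fix an arbitrary $q\in K(0^+((\ber^+)^v))$; the goal is to show $q$ lies in $C\ell_{0^+((\ber^+)^v)}\,\phi^{-1}[C^u]$. By Lemma \ref{l1}, $\widetilde{\phi}(q)\in(0^+(\ber^+))^u$. The key step is to verify the hypothesis of Lemma \ref{leA}: since $M$ is $IPR/\ber^+$, Theorem \ref{th4} (in particular (1) $\Rightarrow$ (5)) guarantees that for every central set $C'$ near zero in $\ber^+$ and every $\epsilon>0$, there exists $\vec x\in(0,\epsilon)^v$ with $M\vec x\in (C')^u$. Because every member of any minimal idempotent of $0^+(\ber^+)$ is central near zero, Lemma \ref{leA} applies to every such minimal idempotent and yields
\[
\widetilde{\phi}\bigl[K(0^+((\ber^+)^v))\bigr]=\widetilde{\phi}\bigl[0^+((\ber^+)^v)\bigr]\cap \bigl(K(0^+(\ber^+))\bigr)^u.
\]
In particular $\widetilde{\phi}(q)\in(K(0^+(\ber^+)))^u$.

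Next I would use the $PS^*$ hypothesis on $C$, namely $K(0^+(\ber^+))\subseteq C\ell_{0^+(\ber^+)}C$. Writing $\widetilde{\phi}(q)=(s_1,\dots,s_u)$, each coordinate satisfies $s_i\in K(0^+(\ber^+))\subseteq C\ell_{0^+(\ber^+)}C$, so $C\in s_i$ for every $i$, and therefore $\widetilde{\phi}(q)\in C\ell_{(0^+(\ber^+))^u}C^u$. Finally, Lemma \ref{leB} gives
\[
\widetilde{\phi}^{-1}\bigl[C\ell_{(0^+(\ber^+))^u}C^u\bigr]=C\ell_{0^+((\ber^+)^v)}\,\phi^{-1}[C^u],
\]
so $q\in C\ell_{0^+((\ber^+)^v)}\,\phi^{-1}[C^u]$, which is exactly the required containment.

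The main obstacle is nothing conceptually new beyond the earlier theorems: it is simply verifying that the IPR/$\ber^+$ assumption on $M$ is strong enough to feed into Lemma \ref{leA} so that one obtains the ideal-level identity $\widetilde{\phi}[K(0^+((\ber^+)^v))]\subseteq(K(0^+(\ber^+)))^u$. Once that is in hand, the $PS^*$ case is the cleanest of the three, because the definition reduces directly to a closure containment of the smallest ideal and no idempotents need to be selected.
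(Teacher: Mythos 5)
Your proposal is correct and follows essentially the same route as the paper's proof: verify the hypothesis of Lemma \ref{leA} via Theorem \ref{th4} to obtain $\widetilde{\phi}[K(0^+((\ber^+)^v))]=\widetilde{\phi}[0^+((\ber^+)^v)]\cap(K(0^+(\ber^+)))^u$, apply the $PS^*$ hypothesis coordinatewise, and pull back through Lemma \ref{leB}. The only cosmetic difference is that you argue pointwise over an arbitrary $q\in K(0^+((\ber^+)^v))$ while the paper states the containment at the level of sets.
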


\begin{proof}
     Let $\phi : (\ber ^+)^v \longrightarrow (\ber^+) ^u$ be defined by $\phi(\vec {x}) = M\vec {x}$ and let \sloppy $\widetilde{\phi}: \beta ((\ber^+)^v)\longrightarrow (\beta \ber^+)^u$ be its continuous extension. Let $C$ be a $PS^*$ set near zero in $\ber ^+$. Therefore, $K(0^+(\ber^+)) \subseteq C \ell_{0^+(\ber^+)} {C}$. Let $p$ be an idempotent in $K(0^+(\ber^+))$. Then by Theorem \ref{th4}, for every $P \in p, \epsilon >0$, there exists $\vec {x} \in (0, \epsilon)^v$ such that  $M\vec{x} \in P^u.$ By Lemma \ref{leA} , $\widetilde{\phi}(K(0^+((\ber^+)^v))) =\widetilde{\phi}(0^+((\ber^+)^v)) \bigcap (K(0^+(\ber^+)))^u$. Thus, $\widetilde{\phi}(K(0^+((\ber^+)^v))) \subseteq  (K(0^+(\ber^+)))^u \subseteq C\ell_{(0^+(\ber^+))^u}{C}^u.$
     Therefore, $K(0^+((\ber^+)^v)) \subseteq \widetilde{\phi}^{-1}(C\ell_{(0^+(\ber^+))^u}{C}^u) = C\ell_{0^+((\ber^+)^v)}\phi^{-1}[C^u]$. Thus, $\phi^{-1}[C^u]$ is a $PS^*$ set near zero.

\end{proof}

\begin{theorem}
    Let $u,v \in \ben$ and $M$ be a $u \times v $ matrix with entries from $\ber^+ \cup \{0\}$ and with no row of $M$ as the zero row . Let $\Psi$ be $Q^*$ set near zero or $P^*$ set near zero in $\ber^+$ . If $C$ is a $\Psi$ set in $\ber^+$, then $\{\vec{x} \in (\ber^+)^v: M\vec {x} \in C^u\}$ is a $\Psi$ set in $(\ber^+)^v$.
\end{theorem}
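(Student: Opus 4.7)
The plan is to prove both the $Q^*$ and $P^*$ cases by contrapositive. Write $\phi(\vec{x}) = M\vec{x}$ and $E = (\ber^+)^v \setminus \phi^{-1}[C^u]$. I will assume $E$ is a $Q$-set (respectively, a $P$-set) near zero in $(\ber^+)^v$ and then produce a witness that $\ber^+ \setminus C$ is a $Q$-set (respectively, a $P$-set) near zero in $\ber^+$, which contradicts the hypothesis that $C$ is $\Psi$.

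For the $Q^*$ case I take a witnessing sequence $\{\vec{x}_n\}_{n=1}^\infty \subseteq (\ber^+)^v$ for $E$ and push it through $M$ to obtain $\vec{y}_n = M\vec{x}_n$. Because $M$ has no zero row, every coordinate of each $\vec{y}_n$ is strictly positive, and continuity of the linear map $M$ gives convergence of $\sum_n \vec{y}_n = M\sum_n \vec{x}_n$ coordinatewise. Each pair $m<n$ satisfies $\vec{y}_n - \vec{y}_m = M(\vec{x}_n - \vec{x}_m) \notin C^u$, so I may pick the least $i(m,n) \in \{1,\dots,u\}$ with $(\vec{y}_n - \vec{y}_m)_{i(m,n)} \notin C$. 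Ramsey's theorem applied to this $u$-colouring of $[\ben]^2$ yields an infinite monochromatic set $\{t_1 < t_2 < \cdots\}$ of common colour $i$, and the scalar subsequence $\{(\vec{y}_{t_k})_i\}_{k=1}^\infty$ then realises the $Q$-set definition for $\ber^+ \setminus C$: its sum converges as a subseries of $\sum_n (\vec{y}_n)_i$, and its pairwise increments lie in $\ber^+ \setminus C$ by monochromaticity.

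For the $P^*$ case I fix $k \in \ben$ and $\epsilon > 0$, and invoke Van der Waerden's theorem to pick $N$ so that every $u$-colouring of $\{1,\dots,N\}$ admits a monochromatic length-$k$ AP. With $a = \max_{i,j} a_{ij}$ I set $\epsilon' = \epsilon/(va)$, ensuring $\phi((0,\epsilon')^v) \subseteq (0,\epsilon)^u$. Since $E$ is a $P$-set near zero, I fix a length-$N$ AP $\{\vec{b}+t\vec{w}: t=1,\dots,N\}$ inside $E \cap (0,\epsilon')^v$, with $\vec{w}$ positive coordinatewise. Colouring each $t$ by the least $i(t)$ with $(M\vec{b}+tM\vec{w})_{i(t)} \notin C$ (which exists since $\vec{b}+t\vec{w}\in E$), Van der Waerden extracts a monochromatic length-$k$ AP $\{t_1+js: j=0,\dots,k-1\}$ of common colour $i$. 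Then $(M\vec{b})_i + (t_1+js)(M\vec{w})_i$, $j=0,\dots,k-1$, is a length-$k$ AP in $(\ber^+ \setminus C) \cap (0,\epsilon)$ with common difference $s(M\vec{w})_i$; this difference is strictly positive because $\vec{w}$ is coordinatewise positive and row $i$ of $M$ contains a positive entry.

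The main obstacle is ensuring that the $M$-image of a nondegenerate AP in $(\ber^+)^v$ remains nondegenerate in the coordinate where Van der Waerden fixes the colour. This relies on both the no-zero-row hypothesis on $M$ and the convention that the direction vector of an AP in $(\ber^+)^v$ has all coordinates strictly positive, so that $M\vec{w}$ has every coordinate strictly positive. Once that is in hand, the rest is routine Ramsey-theoretic (respectively, Van-der-Waerden) bookkeeping to align the single coordinate in which $\phi$ avoids $C$ with the one-dimensional witness required for $\ber^+ \setminus C$.
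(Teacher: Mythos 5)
Your proposal is correct and follows essentially the same route as the paper: both cases are argued by contrapositive, pushing a witnessing $Q$-sequence (resp.\ arithmetic progression) for the complement through $M$, colouring by the coordinate that escapes $C$, and applying Ramsey's theorem (resp.\ van der Waerden, with the same $\epsilon/(va)$ rescaling) to extract a one-dimensional witness contradicting the $Q^*$ (resp.\ $P^*$) hypothesis. The nondegeneracy point you flag is handled identically in the paper, which takes the AP direction $\vec d\in(\ber^+)^v$ so that the projected common difference is positive by the no-zero-row assumption.
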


\begin{proof}
     Let $\phi : (\ber ^+)^v \longrightarrow (\ber^+) ^u$ be defined by $\phi(\vec {x}) = M\vec {x}$ and let $\widetilde{\phi}: \beta ((\ber^+)^v)\longrightarrow (\beta \ber^+)^u$ be its continuous extension.\\
      $\underline{\text{Case} \; \Psi = \text{Q}^* \text{near zero}}$.  Let $C$ be a Q$^*$ set near zero in $\ber ^+$. Let $B=\{\vec{x} \in (\ber^+)^v: M\vec {x} \in C^u\}$. Suppose $B$ is not $Q^*$ set near zero in $(\ber^+)^v$. Then $(\ber^+)^v \setminus B$ is $Q$ set near zero. So there exists a sequence $\{\vec{s}_n\}_{n=1}^\infty$ in $(\ber^+)^v$ such that $\sum_{n=1}^\infty \vec{s}_n$ converges and such that whenever $m<n, \vec{s}_n \in \vec{s}_m+ ((\ber^+)^v \setminus B)$. We write $\vec{s}_n=\begin{pmatrix}
          s_{n1}\\
          \vdots\\
          s_{nv}
      \end{pmatrix}$, $\vec{s}_m=\begin{pmatrix}
          s_{m1}\\
          \vdots\\
          s_{mv}
      \end{pmatrix}$. Given, whenever $m<n, \vec{s}_n \in \vec{s}_m+ ((\ber^+)^v \setminus B)$. So, $\vec{s}_n - \vec{s}_m \notin B$, i.e., $\phi(\vec{s}_n - \vec{s}_m) \notin C^u$.i.e., $M(\vec{s}_n - \vec{s}_m) \notin C^u$.i.e., there exists $k_{m,n} \in \{1,2, \cdots,u\}$ such that the $k_{m,n}^{th}$ entry of the product matrix $M(\vec{s}_n - \vec{s}_m)$ does not belong to $C$. By Ramsey's Theorem, if the collection of all the subsets of $\ben$ of cardinality 2 is $u$ coloured, there exists an infinite subset $A$ of $\ben$ such that the collection of all subsets of $A$ of cardinality 2 is monochromatic. Therefore, $\{\{m,n\}: m,n \in A\}$ and in particular, $\{\{m,n\}: m,n \in A, m < n\}$ is monochromatic. Choose $k_{m,n} =i$ where $i \in \{1,2,\cdots,u\}$. Let $A=\{t(n)\}_{n=1}^\infty.$ Let for $n \in \ben, x_n=\sum_{j=1}^va_{ij}s_{t(n)j}$ where  $M=(a_{ij})$. Then $\sum_{n=1}^\infty x_n$ converges and whenever $m<n$ in $\ben, x_n - x_m \notin C$,i.e., $x_n -x_m \in \ber^+ \setminus C$ whenever $m<n$. This is a contradiction since $C$ is a $Q^*$ set near zero. \\
      $\underline{\text{Case} \; \Psi = \text{P}^* \text{near zero}}$. Let $C$ be a $P^*$ set near zero in $\ber^+$. Therefore, $\ber^+ \setminus C$ is not a P set near zero. Therefore, there exists $ k \in \ben,$ and $ \epsilon' >0$ such that $(\ber^+ \setminus C) \cap (0,\epsilon')$ does not contain any length $k$ Arithmetic progression(AP).
      
      For $k,u$, by Vander Waerden's theorem, there exists $m \in \ben$ such that whenever $\{1,2,\cdots,m\}$ is $u$ coloured, there is a monochromatic length $k$ Arithmetic Progression. Let $B=\{\vec{x} \in (\ber^+)^v: M\vec {x} \in C^u\}$. Suppose $B$ is not $P^*$ set near zero in $(\ber^+)^v$. Then $(\ber^+)^v \setminus B$ is $P$ set near zero. Let $M=(a_{ij})$ and $a=\text{max}\{a_{ij}| 1 \le i \le u, 1 \le j \le v \} $. Therefore, for $\frac{\epsilon'}{av} >0, m \in \ben$, there exists $ \vec{s}, \vec{d} \in (\ber^+)^v$ such that $\{\vec{s}+\vec{d}, \cdots, \vec{s}+m\vec{d}\} \subseteq ((\ber^+)^v \setminus B) \cap (0,\frac{\epsilon'}{av})^v$, where $\vec{s}= \begin{pmatrix}
          s_1\\
          \vdots\\
          s_v
      \end{pmatrix}$, $\vec{d}= \begin{pmatrix}
          d_1\\
          \vdots\\
          d_v
      \end{pmatrix}$, i.e., for each $t \in \{1,2, \cdots, m\}, M(\vec{s}+t\vec{d}) \notin C^u$,i.e., for each $t \in \{1,2,\cdots, m\}$ there exists an entry, $i(t)^{th}$ entry,  of  $M(\vec{s}+t\vec{d})$ which does not belong to $C$. By Vander Waerden's theorem, whenever $\{1,2, \cdots ,m\}$ is $u$ coloured, there exists $r \in \{1,2,\cdots,u\}$, $t',f \in \{1,2,\cdots,m\}$ such that $\{t'+f, \cdots, t'+kf\}$ belongs to the $r^{th}$ cell where $t'+kf \le m$. Define $i: \{1,2,\cdots,m\} \longrightarrow \{1,2,\cdots, u\}$ such that $i(t'+f)=\cdots=i(t'+kf)=r$ where $t'+f, \cdots, t'+kf \in \{1,2,\cdots, m\}$. Therefore, for each $t'' \in \{t'+f, \cdots, t'+kf\}$, $\sum_{j=1}^va_{rj}(s_j+t''d_j) \notin C$. Let $b=\sum_{j=1}^va_{rj}s_j+t'\sum_{j=1}^va_{rj}d_j, e =f\sum_{j=1}^va_{rj}d_j$. then $b+e \notin C, b+2e \notin C$ and so on. Also, $b+e=\sum_{j=1}^va_{rj}s_j+(t'+f)\sum_{j=1}^va_{rj}d_j < a\frac{\epsilon'}{av}+\cdots + a\frac{\epsilon'}{av}=\epsilon'$. Similarly, $b+2e \in (0,\epsilon')$ and so on. Therefore, $\{b+e,b+2e,\cdots,b+ke\} \subseteq (\ber^+ \setminus C) \cap (0,\epsilon')$. This is a contradiction since  $(\ber^+ \setminus C) \cap (0,\epsilon')$ does not contain any length $k$ Arithmetic progression. Thus, $B$ is a $P^*$ set near zero in $(\ber^+)^v$.\\
    
\end{proof}

\begin{theorem}
    Let $u,v \in \ben$ and $M$ be a $u \times v $ matrix with entries from $\ber^+ \cup \{0\}$ which is $IPR/\ber^+$. Let $\Psi$ be central$^*$ near zero or $QC^*$ near zero in $\ber^+$. If $C$ is a $\Psi$ set in $\ber^+$, then $\{\vec{x} \in (\ber^+)^v: A\vec {x} \in C^u\}$ is a $\Psi$ set in $(\ber^+)^v$.
\end{theorem}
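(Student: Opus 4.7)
The plan is to handle both cases uniformly by pushing the hypothesis on $C$ across the continuous homomorphism $\widetilde{\phi}$, using Lemmas \ref{l1}, \ref{leA}, and \ref{leB}. As in the preceding theorems, I would define $\phi:(\ber^+)^v\to(\ber^+)^u$ by $\phi(\vec{x})=M\vec{x}$ and let $\widetilde{\phi}:\beta((\ber^+)^v)\to(\beta\ber^+)^u$ be its continuous extension. Since $M$ is $IPR/\ber^+$, Theorem \ref{th4} ensures the hypothesis of Lemma \ref{leA} holds for every idempotent $p\in 0^+(\ber^+)$, so Lemma \ref{leA} gives the crucial containment $\widetilde{\phi}[K(0^+((\ber^+)^v))]\subseteq (K(0^+(\ber^+)))^u$.

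For the central$^*$ case, I would pick an arbitrary minimal idempotent $q\in 0^+((\ber^+)^v)$, write $\widetilde{\phi}(q)=(s_1,\ldots,s_u)$, and observe two things: first, since $\widetilde{\phi}$ is a homomorphism and by Lemma \ref{l1}, each $s_i$ is an idempotent in $0^+(\ber^+)$; second, since $q\in K(0^+((\ber^+)^v))$, Lemma \ref{leA} forces each $s_i\in K(0^+(\ber^+))$. An idempotent of $K(0^+(\ber^+))$ is automatically a minimal idempotent, so each $s_i\in E(K(0^+(\ber^+)))\subseteq Cl_{0^+(\ber^+)}C$ by the central$^*$ hypothesis on $C$. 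Hence $\widetilde{\phi}(q)\in Cl_{(0^+(\ber^+))^u}C^u$, and Lemma \ref{leB} yields $q\in Cl_{0^+((\ber^+)^v)}\phi^{-1}[C^u]$, proving $\phi^{-1}[C^u]$ is central$^*$ near zero.

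For the $QC^*$ case, I would take an arbitrary idempotent $q\in Cl_{0^+((\ber^+)^v)}K(0^+((\ber^+)^v))$ and push the closure through $\widetilde{\phi}$. Continuity of $\widetilde{\phi}$ combined with Lemma \ref{leA} gives
\[
\widetilde{\phi}\bigl(Cl_{0^+((\ber^+)^v)}K(0^+((\ber^+)^v))\bigr)\subseteq Cl_{(0^+(\ber^+))^u}\widetilde{\phi}[K(0^+((\ber^+)^v))]\subseteq \bigl(Cl_{0^+(\ber^+)}K(0^+(\ber^+))\bigr)^u.
\]
Again Lemma \ref{l1} together with the homomorphism property shows each coordinate $s_i$ of $\widetilde{\phi}(q)$ is an idempotent in $0^+(\ber^+)$; so each $s_i\in E\bigl(Cl_{0^+(\ber^+)}K(0^+(\ber^+))\bigr)\subseteq Cl_{0^+(\ber^+)}C$ by the $QC^*$ hypothesis. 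Thus $\widetilde{\phi}(q)\in Cl_{(0^+(\ber^+))^u}C^u$, and Lemma \ref{leB} finishes the argument.

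The main obstacle in both cases is identifying where $\widetilde{\phi}(q)$ lives: one needs to transfer membership in a structural set (respectively $K$ or $Cl\,K$) from $0^+((\ber^+)^v)$ to $(0^+(\ber^+))^u$ coordinate-wise. For central$^*$ this is immediate from Lemma \ref{leA}, but for $QC^*$ one must also use continuity of $\widetilde{\phi}$ to commute $\widetilde{\phi}$ past the closure operation, and then combine with the fact that closure in a product coincides with product of closures in each coordinate. Once the idempotent coordinates are placed in the correct structural subset of $0^+(\ber^+)$, the hypothesis on $C$ and Lemma \ref{leB} finish the proof in one line.
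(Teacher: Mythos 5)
Your proposal is correct and follows essentially the same route as the paper: in both cases you use Lemma \ref{leA} to place $\widetilde{\phi}(q)$ coordinatewise in $K(0^+(\ber^+))$ (respectively in its closure, by pushing $\widetilde{\phi}$ past the closure via continuity), observe that each coordinate is an idempotent because $\widetilde{\phi}$ is a homomorphism mapping into $(0^+(\ber^+))^u$ by Lemma \ref{l1}, and conclude with Lemma \ref{leB}, exactly as the paper does. One minor caveat: Theorem \ref{th4} justifies the hypothesis of Lemma \ref{leA} only for \emph{minimal} idempotents $p$ (whose members are central near zero), not for every idempotent in $0^+(\ber^+)$ as you assert, but since the argument only ever invokes Lemma \ref{leA} at minimal idempotents this does not affect the proof.
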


\begin{proof}
     Let $\phi : (\ber ^+)^v \longrightarrow (\ber^+) ^u$ be defined by $\phi(\vec {x}) = M\vec {x}$ and let $\widetilde{\phi}: \beta ((\ber^+)^v)\longrightarrow (\beta \ber^+)^u$ be its continuous extension.\\
     $\underline{\text{Case} \; \Psi = \text{central}^* \text{near zero}}$. Let $C$ be a central$^*$ set near zero in $\ber ^+$. Therefore, $E(K(0^+(\ber^+))) \subseteq C\ell_{0^+(\ber^+)}{C}$. Let $p$ be an idempotent in $K(0^+(\ber^+))$. Then by Theorem \ref{th4}, for every $P \in p, \epsilon >0$ there exists $\vec {x} \in (0, \epsilon)^v$ such that  $M\vec {x} \in P^u.$ By Lemma \ref{leA} , $\widetilde{\phi}(K(0^+((\ber^+)^v))) =\widetilde{\phi}(0^+((\ber^+)^v)) \bigcap (K(0^+(\ber^+)))^u$. Let $q$ be an idempotent in $K(0^+((\ber^+)^v))$. Then $\widetilde{\phi}(q)$ is an idempotent in $(K(0^+(\ber^+)))^u$. So, $\widetilde{\phi}(q) \in C\ell_{(0^+(\ber^+))^u}{C}^u$. Therefore, $q \in \widetilde{\phi}^{-1}[C\ell_{(0^+(\ber^+))^u}{C}^u] =  C\ell_{0^+((\ber^+))^v}{\phi}^{-1}[C^u]$.\\
      $\underline{\text{Case} \; \Psi = \text{QC}^*\text{near zero}}$.  Let $C$ be a QC$^*$ set near zero in $\ber ^+$. Therefore, \sloppy $E(C\ell_{0^+(\ber^+)} K(0^+(\ber^+))) \subseteq C\ell_{0^+(\ber^+)}{C} $. We will prove that   $E(C\ell_{(0^+(\ber^+))^v}(K(0^+((\ber ^+))^v)) \subseteq C\ell_{0^+((\ber^+)^v)} {\phi^{-1}[C^u]} $. Let $p \in E(C \ell_{0^+(\ber^+)} (K(0^+(\ber^+)))$. Thus, \\
          $\widetilde{\phi}[C \ell_{0^+((\ber^+)^v)} \; K(0^+((\ber^+)^v))] \subseteq C\ell_{(0^+(\ber^+))^u} \; \widetilde{\phi}[K(0^+((\ber^+)^v))]$ \\
    $ = C\ell_{(0^+(\ber^+))^u} \;[ \widetilde{\phi}(0^+((\ber^+)^v))\bigcap (K(0^+(\ber^+))^u)] \text{(Using Lemma \ref{leA})}$ \\
      $ \subseteq C\ell \;_{(0^+(\ber^+))^u}\widetilde{\phi}[0^+((\ber^+)^v)] \bigcap  C\ell_{(0^+(\ber^+))^u} (K(0^+(\ber^+)))^u$\\
       $=C\ell \;_{(0^+(\ber^+))^u}\widetilde{\phi}[0^+((\ber^+)^v)] \bigcap (C\ell_{0^+(\ber^+)} K(0^+(\ber^+)))^u$\\
     \sloppy Thus if $q$ is an idempotent in $C \ell_{0^+((\ber^+)^v)} [K(0^+((\ber^+)^v))]$, then $\widetilde{\phi}(q)$ is an idempotent in $(C\ell_{0^+(\ber^+)}\; K(0^+(\ber^+)))^u$ and $(E(C\ell_{0^+(\ber^+)}\; K(0^+(\ber^+))))^u \subseteq (C\ell_{0^+(\ber^+)}\; C)^u \subseteq C\ell_{(0^+(\ber^+))^u}{C}^u$. Therefore, $\widetilde{\phi}(q) \in (C\ell_{0^+(\ber^+)}C)^u$, i.e., $q \in \widetilde{\phi}^{-1}[C\ell_{(0^+(\ber^+))^u}{C}^u] \subseteq C \ell_{0^+((\ber^+)^v)}(\phi^{-1}[C^u])$.\\

\end{proof}

\begin{lemma} \label{leC}
   Let $u,v \in \ben$ and let $M$ be a $u \times v$ matrix with entries from $\ber^+ \cup \{0\}$ and with no row equal to the zero row. Define $\phi : (\ber ^+)^v \longrightarrow (\ber^+)^u$ by $\phi(\vec {x}) = M\vec {x}$ and let $\widetilde{\phi}: \beta ((\ber^+)^v)\longrightarrow (\beta \ber^+)^u$ be its continuous extension.
    Let $q \in J_0((\ber^+)^v), \widetilde{\phi}(q)= \begin{pmatrix}
    p_1\\
    \vdots\\
    p_u
\end{pmatrix}$  then for each $i \in \{1,2, \cdots, u\},$ $ p_i \in J_0(\ber^+).$   
\end{lemma}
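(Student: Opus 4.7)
The plan is to verify, for each $i \in \{1, 2, \ldots, u\}$, that $p_i$ lies in $0^+(\ber^+)$ and that every member of $p_i$ is a J-set near zero in $\ber^+$. The first assertion is immediate from Lemma \ref{l1}, since $q \in J_0((\ber^+)^v) \subseteq 0^+((\ber^+)^v)$ forces $\widetilde{\phi}(q) \in (0^+(\ber^+))^u$.

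To handle the J-set property, I would fix $A \in p_i$ and show that $B := \{\vec{x} \in (\ber^+)^v : (M\vec{x})_i \in A\} \in q$. Writing $\pi_i : (\beta\ber^+)^u \to \beta\ber^+$ for the $i$-th coordinate projection, the map $\pi_i \circ \widetilde{\phi}$ is a continuous extension of $\vec{x} \mapsto (M\vec{x})_i$, and $\pi_i(\widetilde{\phi}(q)) = p_i$, so $A \in p_i$ forces $B = (\pi_i \circ \phi)^{-1}[A] \in q$. Since $q \in J_0((\ber^+)^v)$, the set $B$ is a J-set near zero in $(\ber^+)^v$.

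Given a finite family $F = \{f_1, \ldots, f_k\}$ of sequences in $\ber^+$ converging to $0$ and $\delta > 0$, the next step is to lift each $f_l$ to a sequence $\vec{g}_l$ in $(\ber^+)^v$ converging to $\vec{0}$ that satisfies $(M\vec{g}_l(t))_i = f_l(t)$ for every $t$. Because the $i$-th row of $M = (a_{ij})$ is not the zero row and its entries are non-negative, $c := \sum_{j=1}^v a_{ij} > 0$; setting $\vec{w} := (1,1,\ldots,1)$ and $\vec{g}_l(t) := (f_l(t)/c)\,\vec{w}$ then does the job. Choosing $\delta' > 0$ with $c\delta' < \delta$ and applying the J-set property of $B$ to the lifted family $\{\vec{g}_1, \ldots, \vec{g}_k\}$ with parameter $\delta'$ yields $\vec{a} \in (\ber^+)^v \cap (0,\delta')^v$ and $H \in \mathcal{P}_f(\ben)$ with $\vec{a} + \sum_{t \in H}\vec{g}_l(t) \in B$ for every $l$. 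Setting $a := (M\vec{a})_i$, linearity of $M$ gives $a + \sum_{t \in H} f_l(t) \in A$ for every $l$, and the estimate $0 < a < c\delta' < \delta$ confirms $a \in \ber^+ \cap (0, \delta)$.

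The main subtlety is the lifting step; the \emph{no zero row} hypothesis is precisely what allows the uniform choice $\vec{g}_l(t) = (f_l(t)/c)\,\vec{w}$ to be simultaneously strictly positive, convergent to $\vec{0}$, and to reproduce $f_l(t)$ in the $i$-th coordinate after multiplication by $M$. Once the lift is fixed, verifying that $A$ is a J-set near zero reduces to applying the J-set property of $B$ together with a routine bound on $(M\vec{a})_i$.
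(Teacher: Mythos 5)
Your proposal is correct and follows essentially the same route as the paper: reduce to the $i$-th coordinate via the projection $\pi_i$, pull the set back to a member of $q$ (the paper does this with a neighbourhood argument, you do it with the ultrafilter characterization of $\pi_i\circ\widetilde{\phi}$ — these are equivalent), lift the given sequences to $(\ber^+)^v$ by dividing by the $i$-th row sum and repeating across coordinates, and rescale $\delta$. The only cosmetic difference is that the paper works with an auxiliary $D\in q$ satisfying $(\pi_i\circ\phi)(D)\subseteq B$ rather than with the full preimage, which changes nothing in substance.
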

\begin{proof}
  Let $i \in \{1,2, \cdots, u\}$ and let $B \in p_i$. We will show that $B$ is a J set  near zero. Let $F \in \mathcal{P}_f(\mathcal{R}_0^+)$. We will show that for every $\delta>0$, there exists $a \in (0,\delta)$ and $H \in \mathcal{P}_f(\ben)$ such that for all $f\in F, a + \sum_{t \in H}f(t) \in B$. Recall that $J_0(\ber^+)=\{ p \in 0^+(\ber^+): \forall \; B \in p, B$ is a J set near zero\}. By Lemma \ref{l1}, $\widetilde{\phi}(q) \in (0^+(\ber^+))^u$, so for each $i \in \{1,2,\cdots,u\}$, $ p_i \in 0^+(\ber^+)$. Let $\pi_i: (\beta (\ber^+))^u \longrightarrow \beta \ber^+$ be the $i^{th}$ projection map, then $\pi_i^{-1}[C\ell_{0^+(\ber^+)}{B}]$ is a neighbourhood of $\widetilde{\phi}(q)$ so there exists a neighbourhood, say, $C\ell_{\beta (\ber^+)^v}{D}$ of $q$ such that $\widetilde{\phi}(C\ell_{(\beta (\ber^+)^v}{D}) \subseteq \pi_i^{-1}[{C\ell_{0^+(\ber^+)}}{B}]$, i. e., there exists $D \in q$ such that $(\pi_i \circ \widetilde{\phi})(C\ell_{\beta (\ber^+)^v}{D}) \subseteq C\ell_{0^+(\ber^+)}{B}$ , i.e., there exists $D \in q$ such that $(\pi_i \circ \phi)(D) \subseteq B$. Let $ M=(a_{ij}) , \sum_{j=1}^v a_{ij} =k$ Now,  $D$ is a J-set near zero in ($\ber^+)^v$. So let $F'=\{ \frac{1}{k}\begin{pmatrix}
      f(n)\\
      \vdots\\
      f(n)
  \end{pmatrix} = \vec{f}(n)| f \in F \}$, then $F'\in \mathcal{P}_f((\mathcal{R}_0^+)^v))$. Let $\epsilon > 0$, then, for $\frac{\epsilon}{k} > 0, F' \in  \mathcal{P}_f((\mathcal{R}_0^+)^v))$, there exists $\vec{b} \in (0, \frac{\epsilon}{k})^v, H \in  \mathcal{P}_f(\ben)$ such that for all $\vec{f} \in F', \vec{b} + \sum_{t \in H} \vec{f}(t) \in D$, i.e., there exists $\vec{b} \in (0, \frac{\epsilon}{k})^v, H \in  \mathcal{P}_f(\ben)$ such that for all $\vec{f} \in F', (\pi_{i} \circ \phi) (\vec{b} + \sum_{t \in H} \vec{f}(t) \in B$. Let $\vec{b}= \begin{pmatrix}
      b_1\\
      \vdots\\
      b_v
  \end{pmatrix}$. Then $\sum_{j=1}^v a_{ij}(b_j + \sum_{t \in H} \frac{f(t)}{k}) \in B$. Let  $\sum_{j=1}^v a_{ij}b_j =c$ then $c \in (0,\epsilon)$. Thus $c + \sum_{t \in H} f(t) \in B$ and hence $B$ is a J-set near zero.
\end{proof}
\begin{theorem}
    Let $u,v \in \ben$ and $M$ be a $u \times v $ matrix with entries from $\ber^+ \cup \{0\}$ and with no row of $M$ as the zero row. Let $\Psi$ be C$^*$ near zero or $J^*$ set near zero or $IP^*$ near zero. If $C$ is a $\Psi$ set in $\ber^+$, then $\{\vec{x} \in (\ber^+)^v: M\vec {x} \in C^u\}$ is a $\Psi$ set in $(\ber^+)^v$.
\end{theorem}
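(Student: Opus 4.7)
The plan is to handle all three cases in a uniform way using the three lemmas already established, namely Lemma \ref{l1}, Lemma \ref{leB}, and Lemma \ref{leC}. As before, I let $\phi:(\ber^+)^v\longrightarrow (\ber^+)^u$ be $\phi(\vec x)=M\vec x$ with continuous extension $\widetilde\phi:\beta((\ber^+)^v)\longrightarrow(\beta\ber^+)^u$. The master observation is that $\widetilde\phi$ is a homomorphism, so it sends idempotents to idempotents, and by Lemma \ref{l1} it maps $0^+((\ber^+)^v)$ into $(0^+(\ber^+))^u$. Combined with Lemma \ref{leB}, proving $q\in C\ell_{0^+((\ber^+)^v)}\phi^{-1}[C^u]$ reduces to showing $\widetilde\phi(q)\in C\ell_{(0^+(\ber^+))^u}C^u=(C\ell_{0^+(\ber^+)}C)^u$, i.e., every coordinate of $\widetilde\phi(q)$ lies in $C\ell_{0^+(\ber^+)}C$.

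\textbf{Case $\Psi=IP^*$ near zero.} The hypothesis is $E(0^+(\ber^+))\subseteq C\ell_{0^+(\ber^+)}C$. I would take an arbitrary idempotent $q\in 0^+((\ber^+)^v)$; since $\widetilde\phi$ is a homomorphism and $q=q+q$, Lemma \ref{l1} gives that every coordinate of $\widetilde\phi(q)$ is an idempotent in $0^+(\ber^+)$, hence lies in $C\ell_{0^+(\ber^+)}C$. Applying Lemma \ref{leB} then places $q$ in $C\ell_{0^+((\ber^+)^v)}\phi^{-1}[C^u]$, as required.

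\textbf{Case $\Psi=J^*$ near zero.} The hypothesis is $J_0(\ber^+)\subseteq C\ell_{0^+(\ber^+)}C$. For $q\in J_0((\ber^+)^v)$, Lemma \ref{leC} delivers precisely that each coordinate of $\widetilde\phi(q)$ belongs to $J_0(\ber^+)$, and hence to $C\ell_{0^+(\ber^+)}C$ by hypothesis. Lemma \ref{leB} then yields $q\in C\ell_{0^+((\ber^+)^v)}\phi^{-1}[C^u]$.

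\textbf{Case $\Psi=C^*$ near zero.} Here one combines the two previous ideas. The hypothesis is $E(J_0(\ber^+))\subseteq C\ell_{0^+(\ber^+)}C$. Given an idempotent $q\in J_0((\ber^+)^v)$, Lemma \ref{leC} says each coordinate of $\widetilde\phi(q)$ is in $J_0(\ber^+)$, and since $\widetilde\phi$ is a homomorphism each such coordinate is also an idempotent. Thus every coordinate of $\widetilde\phi(q)$ lies in $E(J_0(\ber^+))\subseteq C\ell_{0^+(\ber^+)}C$, and Lemma \ref{leB} closes the argument as before. There is essentially no obstacle here given the groundwork already laid; the only point that requires a little care is to notice that the three cases each require only the hypothesis "no row of $M$ is the zero row" (rather than the full strength $IPR/\ber^+$), because Lemmas \ref{l1}, \ref{leB} and \ref{leC} all work under that weaker hypothesis and no first-entries argument via Theorem \ref{th4} is invoked.
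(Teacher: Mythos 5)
Your proof is correct and follows essentially the same route as the paper: in each case you push an idempotent (or element of $J_0$) forward under $\widetilde\phi$, use Lemma \ref{l1} or Lemma \ref{leC} to locate its coordinates, and then pull back along Lemma \ref{leB}. Your closing remark that only the ``no zero row'' hypothesis is needed (rather than $IPR/\ber^+$) also matches the theorem as stated in the paper.
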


\begin{proof}
   Let $\phi : (\ber ^+)^v \longrightarrow (\ber^+) ^u$ be defined by $\phi(\vec {x}) = M\vec {x}$ and let $\widetilde{\phi}: \beta ((\ber^+)^v)\longrightarrow (\beta \ber^+)^u$ be its continuous extension.\\
     $\underline{\text{Case} \; \Psi = C^* \text{near zero}}$. Let $C$ be a $C^*$ set near zero in $\ber ^+$. Therefore, $E(J_0(\ber^+)) \subseteq C\ell_{0^+(\ber^+)}{C}$. Let $q$ be an idempotent in $J_0((\ber^+)^v))$ and let $\widetilde{\phi}(q)= \begin{pmatrix}
         p_1\\
         \vdots\\
         p_u
     \end{pmatrix}$.  Then by Lemma \ref{leC}, each $p_i$ is an idempotent in $J_0(\ber^+)$, therefore, $p_i \in E( J_0(\ber^+)) \subseteq C\ell_{0^+(\ber^+)}{C}$. Therefore, $\widetilde{\phi}(q)  \in C\ell_{(0^+(\ber^+))^u}{C}^u$, thus, $q \in \widetilde{\phi}^{-1}(C\ell_{(0^+(\ber^+)^u}{C}^u)= C\ell_{0^+((\ber^+)^v)} \phi^{-1}[C^u]$.\\
     $\underline{\text{Case} \; \Psi = J^* \text{near zero}}$. Let $C$ be a $J^*$ set near zero in $\ber ^+$. Therefore, $J_0(\ber^+) \subseteq C\ell_{0^+(\ber^+)}{C}$. Let $q \in J_0((\ber^+)^v)$ and let $\widetilde{\phi}(q)= \begin{pmatrix}
         p_1\\
         \vdots\\
         p_u
     \end{pmatrix}$.  Then by Lemma \ref{leC}, each $p_i \in J_0(\ber^+)$, therefore, $p_i \in  C\ell_{0^+(\ber^+)}{C}$. Therefore, $\widetilde{\phi}(q)  \in C\ell_{(0^+(\ber^+))^u}{C}^u$, thus, $q \in \widetilde{\phi}^{-1}[C\ell_{(0^+(\ber^+))^u}{C}^u]= C\ell_{0^+((\ber^+)^v)} \phi^{-1}[C^u]$.
     $\underline{\text{Case} \; \Psi = \text{IP}^* \text{near zero}}$.  Let $C$ be a IP$^*$ set near zero in $\ber ^+$. Then, $E(0^+(\ber^+)) \subseteq C\ell_{0^+(\ber^+)}{C} $. Let $q$ be an idempotent in $0^+((\ber^+)^v)$. and let $\widetilde{\phi}(q)= \begin{pmatrix}
         p_1\\
         \vdots\\
         p_u
     \end{pmatrix}$.  Then by Lemma \ref{l1}, $\widetilde{\phi}(q) \in (0^+(\ber^+))^u$. Now, each $p_i$ is an idempotent in $0^+(\ber^+)$, therefore, $p_i \in  C\ell_{0^+(\ber^+)}{C}$, i.e., $\widetilde{\phi}(q)  \in C\ell_{(0^+(\ber^+))^u}{C}^u$, thus, $q \in \widetilde{\phi}^{-1}[C\ell_{(0^+(\ber^+))^u}{C}^u]= C \ell_{0^+((\ber^+)^v)}(\phi^{-1}[C^u]).$
\end{proof}

\bibliographystyle{amsplain}

\end{document}